\documentclass[a4paper,reqno]{amsart}
\usepackage[paperheight=11in,paperwidth=8.5in]{geometry}
\usepackage[utf8]{inputenc}
\usepackage{amsmath,amsfonts,amsthm,amssymb}
\usepackage{graphicx,enumerate}
\usepackage{mathtools,bm}

\ifdefined\pdfoutput\pdfoutput=1\fi
\makeatletter
\def\l@section{\@tocline{1}{12pt plus2pt}{0pt}{}{\bfseries}}
\def\l@subsection{\@tocline{2}{0pt}{2pc}{2pc}{}}
\makeatother
\makeatletter
\def\subsection{\@startsection{subsection}{2}{\z@}%
	{-3.25ex\@plus -1ex \@minus -.2ex}%
	{1.5ex \@plus .2ex}%
	{\normalfont\sffamily\itshape}}
\def\subsubsection{\@startsection{subsubsection}{3}%
	\z@{.5\linespacing\@plus.7\linespacing}{-.5em}%
	{\normalfont\itshape}}
\renewcommand\paragraph{\@startsection{paragraph}{4}{\z@}%
	{3.25ex \@plus1ex \@minus.2ex}%
	{-1em}%
	{\normalfont\normalsize\bfseries}}
\makeatother

\usepackage[dvipsnames]{xcolor}
\definecolor{RawSeinna}{named}{RawSienna}
\definecolor{Sepia}{named}{Sepia}
\usepackage[colorlinks=true, pdfstartview=FitV, linkcolor=Brown, citecolor=Brown, urlcolor=Brown,pdfencoding=auto]{hyperref}

\numberwithin{equation}{section}
\newtheorem{theorem}{Theorem}[section]
\newtheorem*{theorem*}{Theorem}

\newtheorem{lemma}[theorem]{Lemma}
\newtheorem{proposition}[theorem]{Proposition}
\newtheorem{conjecture}[theorem]{Conjecture}

\theoremstyle{definition}

\newcommand{\R}{\mathbb{R}}

\newcommand{\N}{\mathbb{N}}

\def\beq{\begin{equation}}
\def\eeq{\end{equation}}

\def\D{\mathcal{D}}

\newcommand{\nD}{{\Delta\kern-0.5em/}}

\DeclareMathOperator{\supp}{supp}
\DeclareMathOperator{\sgn}{sgn}

\allowdisplaybreaks
\title[]{\textbf{Bilinear Operators Along Variable Curves}}

\author[N. Gismondi]{Nicholas Gismondi}
\address[N. Gismondi]{Department of Mathematics, Purdue University, IN 47907, USA}
\email{ngismond@purdue.edu}

\author[M. Pathak]{Mandon Pathak}
\address[M. Pathak]{Department of Mathematics, Purdue University, IN 47907, USA}
\email{pathak30@purdue.edu}

\thanks{\textit{Key words and phrases.} Zygmund's differentiation conjecture, Stein's Conjecture,  bilinear singular integral operators, maximal operators along variable curves,  Time-frequency analysis, LGC method, multilinear operators, Bilinear Hilbert transform, analysis, Zero/non-zero curvature.}

\date{\today}

\begin{document}

\begin{abstract}
Motivated by Zygmund's differentiation conjecture and Stein's conjecture for Lipschitz vector fields, we study bilinear singular integral and maximal operators along variable curves given by
$$
\begin{aligned}
BH_a^\alpha(f,g)(x) &= \operatorname{p.v.}\int_{\mathbb R}
f(x-t) g\left(x + a(x)t^\alpha\right)\,\frac{dt}{t},\\
BM_a^\alpha(f,g)(x) &= \sup_{r>0}\frac{1}{2r}\int_{-r}^{r}
\left|f(x-t)g\left(x+a(x)t^\alpha\right)\right|\,dt,\\
BSH_c^\alpha(f,g)(x) &= \operatorname{p.v.} \int_{\mathbb R} f(x-t) g(x + t^\alpha)\, \frac{dt}{t-c(x)},
\end{aligned}
$$
where $\alpha > 0$ and $a,c:\mathbb R\to\mathbb R$ are measurable.
For every $\alpha>0$ and $0<A<B<\infty$, we construct a smooth, globally Lipschitz coefficient $a:\mathbb R\to(A,B)$ such that, for every H\"{o}lder conjugate pair $1<p,q<\infty$, no strong--type $L^p(\mathbb R)\times L^q(\mathbb R)\to L^1(\mathbb R)$ estimate holds for $BH_a^\alpha$ or $BM_a^\alpha$ on $C_c^\infty(\mathbb R)\times C_c^\infty(\mathbb R)$; moreover, there exist nonnegative smooth inputs in $\bigcap_{s>1}L^s(\mathbb R)$, one of which is compactly supported, whose outputs under $BH_a^\alpha$ and $BM_a^\alpha$ do not belong to $L^1(\mathbb R)$. For any measurable $a:\mathbb R\to\mathbb R$, $BM_a^\alpha$ maps $L^p(\mathbb R) \times L^q(\mathbb R)$ boundedly into $L^r(\mathbb R)$ whenever $1<p,q\leq\infty$, $1/r = 1/p + 1/q$, and $r > 1$, and maps $L^p(\mathbb R) \times L^q(\mathbb R)$ into weak $L^1(\mathbb R)$ when $1/p+1/q=1$, with constants independent of $a$. Finally, we investigate the shifted operator $BSH_c^\alpha$, including dyadic-integer-valued $c$, and prove that there is a measurable choice of $c$ such that the operator admits no bounded $L^p(\mathbb R) \times L^q(\mathbb R) \to L^{r,\infty}(\mathbb R)$ estimates in the range $1 \leq p,q \leq \infty$ and $\frac{1}{2}<r<\infty$, resolving an open problem posed by Gaitan and Lie.
\end{abstract}

\maketitle

\section{Introduction}
\subsection{Motivation and Background}
The study of averages along spatially varying curves is closely connected with Zygmund's differentiation conjecture. In its $L^2$ formulation, this conjecture asserts that, for a Lipschitz unit vector field $v:\mathbb R^2\to\mathbb S^1$ and every $F\in L^2(\mathbb R^2)$,
$$
\lim_{r \to 0^+} \frac{1}{2r}\int_{-r}^{r} F(z-tv(z))\,dt = F(z)
\qquad \text{for almost every }z\in\mathbb R^2.
$$
The associated local maximal operator is
$$
M_{v,r_0}F(z) := \sup_{0<r<r_0} \frac{1}{2r}\int_{-r}^{r}|F(z-tv(z))|\, dt,
$$
where $r_0 > 0$ is chosen sufficiently small relative to the Lipschitz constant of $v$. A weak--type $(2,2)$ estimate for this operator would imply the differentiation statement by approximation; see \cite{UnifiedLie} for background. One can of course consider a singular integral analogue given by
$$
H_{v,r_0}F(z) = \int_{-r_0}^{r_0} F(z-tv(z))\, \frac{dt}{t} \ \footnote{All integrals which contain a singularity are understood in the principal value sense.}.
$$
The Stein conjecture asserts that $H_{v,r_0}$ is bounded from $L^2(\R^2)$ into $L^2(\R^2)$ for $v$ Lipschitz and $r_0$ chosen small enough depending on the Lipschitz norm of $v$. Until extremely recently, the best result in this direction was the work of Lacey and Li, who showed that if the so-called \textit{Lipschitz Kakeya maximal function} is $L^p(\R^2)$ to $L^p(\R^2)$ bounded for some $1 < p < 2$, then the Stein conjecture for $v \in C^{1+\eta}(\R^2)$ holds \cite{LaceyLi}. However, the full Stein conjecture was in fact shown to hold for $v$ Lipschitz in the recent groundbreaking work from OpenAI \cite{SteinConjecture}. The proof establishes a strong $L^2(\R^2)$ bound for $H_{v,r_0}$ uniformly over all inner truncations at outer scale $r_0 \lesssim \operatorname{Lip}(v)^{-1}$, answering Stein's weak-$(2,2)$ conjecture in the affirmative. Using spatial localization and a Lipschitz commutator, Littlewood--Paley reassembly reduces the global bound to a logarithmically improved restricted estimate on a single vertical frequency band, which is decomposed into narrow vertical bins and a Hilbert-valued ``popular forest'' of wave packets. While Lacey and Li's weak-$(2,2)$ Lipschitz Kakeya estimate and L\'{e}pingle's martingale $3$--variation inequality \cite{Lepingle, ZorinKranich} control the popular top count and the number of separated significant directions at each angular scale up to polynomial losses in the set-measure disparity $L = \log\sqrt{|E|/|F|}$, the necessary oscillatory saving comes from proving a sharp finite Fourier inequality with exact operator norm $1$ and an improved coefficient exponent, iterated across a multiscale angular hierarchy---thereby bypassing both the open $L^p(\R^2)$, $1 < p < 2$, Lipschitz Kakeya conjecture and the $C^{1+\eta}(\R^2)$ regularity assumption of \cite{LaceyLi}.\footnote{See the AI Declaration in Section \ref{sec:AI_dec}.}

Lie considered the curved variants of each of these operators in \cite{UnifiedLie} and showed that they are $L^p(\R^2)$ bounded for $1 < p < \infty$. The reason for this is that the curvature present breaks the modulation invariance in the operator, which allows more classical techniques like $TT^*$, van der Corput, and Littlewood--Paley estimates to be employed, as opposed to the heavy machinery of wave packet analysis that is required to attack the ``flat'' versions of the operators. However, as we will demonstrate below, estimates for the linear and sublinear operators on $\R^2$ do not imply bounds for the bilinear and sub-bilinear variants that we consider. One of our primary goals is to determine to what extent these bounds carry over to this setting.

For $f,g \in \mathcal{S}(\R)$, we consider the following bilinear integral operators:
\begin{equation}\label{eq:bilin_Hilbert}
    BH_a^\alpha(f,g)(x) = \int_{\R} f(x-t) g\left(x+a(x)t^\alpha\right)\, \frac{dt}{t},
\end{equation}
\begin{equation}\label{eq:bilin_max}
    BM_a^\alpha(f,g)(x) = \sup_{r > 0} \frac{1}{2r} \int_{-r}^r \left|f(x-t) g\left(x + a(x)t^\alpha\right)\right|\, dt,
\end{equation}
and
\begin{equation}\label{eq:bilin_shift}
    BSH^\alpha_c(f,g)(x) = \int_{\R} f(x-t) g(x+t^\alpha)\, \frac{dt}{t - c(x)},
\end{equation}
where $a,c:\R \to \R$ are assumed to be measurable and $\alpha > 0$\footnote{For integral $\alpha$, the usual real power is used. For nonintegral $\alpha$ and $t < 0$, either of the conventions $t^\alpha = \sgn(t) |t|^\alpha$ or $t^\alpha = |t|^\alpha$ on the negative half-line is allowed; all principal lower bounds below use only $t>0$, and possible contributions from $t<0$ are checked separately under both conventions.}. The study of the operators proposed in \eqref{eq:bilin_Hilbert}--\eqref{eq:bilin_shift} originates in the work of Lie \cite{UnifiedLie} and later Gaitan and Lie \cite{GaitanLie2}.

The three operators in \eqref{eq:bilin_Hilbert}--\eqref{eq:bilin_shift} correspond to three different pointwise selection mechanisms: selection of the curve coefficient, selection of the spatial averaging scale, and selection of the location of the kernel singularity. Although their definitions are closely related, each operator isolates a distinct analytic difficulty.

The operator $BH_a^\alpha$ belongs to the theory of bilinear singular integrals along curves. If $a(x)\equiv b$ is constant and $\alpha = 1$, then
\begin{equation*}\label{eq:BHT-special-case}
    BHT_b(f,g)(x) := BH_b^1(f,g)(x) = \int_{\mathbb{R}} f(x-t)g(x+bt)\,\frac{dt}{t},
\end{equation*}
which is the classical bilinear Hilbert transform with slope $b$ (nondegenerate whenever $b \notin \{0,-1\}$). Originally it arose in connection with the Cauchy integral along Lipschitz curves, where Calder\'{o}n suggested that proving boundedness of $BHT_b$ might provide an avenue to establish $L^2$ boundedness of the Cauchy transform along Lipschitz curves \cite{Calderon, MuscaluSchlag}. Ultimately, the Cauchy transform along Lipschitz curves was proven to be $L^2$ bounded by Coifman, McIntosh, and Meyer using other means \cite{CMM}, while the boundedness of the bilinear Hilbert transform remained open for nearly two decades. In one of the seminal achievements of time--frequency analysis, Lacey and Thiele established $L^p$ boundedness of $BHT_b$ using wave packet analysis \cite{LaceyThiele1, LaceyThiele2}. Further developments by Grafakos and Li \cite{GL} and later Li \cite{UniformLi} showed that the bounds are uniform in $b$ across certain $L^p$ ranges.

When $a(x)\equiv b$ and $\alpha\neq1$, the operator becomes the bilinear Hilbert transform along the monomial curve
\begin{equation*}
    \Gamma_b^\alpha(t) = \left(t,-bt^\alpha\right).
\end{equation*}
For $b \neq 0$ and $t \neq 0$, this curve has nonvanishing curvature whenever $\alpha \neq 1$. Thus the parameter $\alpha$ distinguishes the zero-curvature regime $\alpha = 1$ from the genuinely curved regime $\alpha \neq 1$. The latter combines singular-integral cancellation with oscillatory decay generated by curvature. Li was the first to consider the boundedness of this curved bilinear Hilbert transform, establishing $L^2 \times L^2 \to L^1$ boundedness for $b = 1$ and integer $\alpha \geq 2$ \cite{Li}. Lie subsequently extended this result to a broad class of non-flat smooth curves, proving $L^p \times L^q \to L^r$ boundedness for $1 < p < \infty$, $1 < q \leq \infty$, and $1 \leq r < \infty$ with $1/p + 1/q = 1/r$ \cite{BHCurvatureLie1, BHCurvatureLie2}.

Allowing $a(x)$ to depend measurably on $x$ introduces a further difficulty. At each output point $x$, the operator selects a possibly different curve from the family
\begin{equation*}
    \Gamma_u^\alpha(t) = \left(t,-u\,t^{\alpha}\right),
    \qquad u\in\mathbb{R}.
\end{equation*}
Consequently, the operator is no longer translation invariant in the output variable, and its underlying time--frequency geometry changes from point to point.

This formulation may also be understood as a linearization of the coefficient-maximal operator
\begin{equation}\label{eq:coefficient-maximal}
    BH_*^\alpha(f,g)(x) := \sup_{u\in\mathbb{R}} \left|\int_{\mathbb{R}} f(x-t) g\left(x+u\,t^{\alpha}\right)\,\frac{dt}{t}\right|.
\end{equation}
Indeed, a measurable choice $u=a(x)$ approximately realizing the supremum in \eqref{eq:coefficient-maximal} reduces the maximal problem to estimates for $BH_a^\alpha$ that are uniform over all admissible measurable functions $a$. The measurable coefficient in \eqref{eq:bilin_Hilbert} therefore records the pointwise selection inherent in the corresponding maximal problem.

Regarding the history of bilinear maximal operators, Lacey first showed that the ``flat'' bilinear maximal function
$$
BM(f,g)(x) := BM_1^1(f,g)(x) = \sup_{r > 0} \frac{1}{2r} \int_{-r}^r |f(x-t) g(x+t)|\, dt
$$
maps $L^p(\R) \times L^q(\R)$ boundedly into $L^r(\R)$ for $r > 2/3$ \cite{LaceyBilinMax}. The curved variant ($\alpha \neq 1$, $a(x) \equiv 1$) was subsequently investigated by Gaitan and Lie, who proved that it maps boundedly into $L^r(\R)$ for $r \geq 1$, and moreover that this range of Lebesgue exponents is sharp \cite{GaitanLie1}. It is somewhat surprising that introducing curvature narrows the range of admissible target exponents $r$, eliminating bounded mappings into quasi-Banach spaces $L^r(\R)$ with $r < 1$.

The connection with the planar theory can be made explicit
through tensor products. Define
$$
\begin{aligned}
H_a^\alpha F(x,y) &:= \int_{\mathbb R}
F\left(x-t,y+a(x)t^\alpha\right)\,\frac{dt}{t},\\
M_a^\alpha F(x,y) &:= \sup_{r>0}\frac{1}{2r}\int_{-r}^{r}
\left|F\left(x-t,y+a(x)t^\alpha\right)\right|\,dt.
\end{aligned}
$$
For $F = f\otimes g$, where $F(x,y) = f(x)g(y)$, we have
the exact identities
\begin{equation*}\label{eq:diagonal-restriction}
\begin{aligned}
BH_a^\alpha(f,g)(x) &= H_a^\alpha(f\otimes g)(x,x),\\
BM_a^\alpha(f,g)(x) &= M_a^\alpha(f\otimes g)(x,x).
\end{aligned}
\end{equation*}
When $\alpha = 1$, the planar operators act along lines with direction $(1,-a(x))$, placing them within the framework of directional operators whose direction depends on one coordinate. When $\alpha \ne 1$ and $a(x) \ne 0$, the corresponding paths have nonvanishing curvature away from $t=0$.

Thus the bilinear operators are diagonal restrictions of
planar operators along variable curves. This restriction
introduces an additional analytic difficulty: ordinary
$L^p(\mathbb R^2)$ control does not provide a bounded trace
on the diagonal. Consequently, planar boundedness results
do not automatically yield the corresponding bilinear
estimates. \textit{The identities above provide a concrete link with the variable-curve theory surrounding Zygmund's and Stein's conjectures, while identifying the additional restriction problem inherent in the bilinear setting.}

The maximal operator $BM_a^\alpha$ retains the same curved geometry as $BH_a^\alpha$, while removing the cancellation of the odd kernel $dt/t$ and replacing it with positive averages. It measures the concentration of
\begin{equation*}
    f(x-t)g\left(x+a(x)t^\alpha\right)
\end{equation*}
along the relevant family of curves, while the supremum over $r>0$ introduces a pointwise selection over spatial scales. Studying $BH_a^\alpha$ and $BM_a^\alpha$ together therefore allows one to separate those estimates that follow from geometric size considerations from those that depend essentially on oscillation and cancellation. Despite this difference, the two operators share the same basic scale decomposition and much of the same stationary and nonstationary geometry; for a detailed comparison of their analysis, see \cite{GaitanLie2}.

The shifted operator $BSH_{c}^{\alpha}$ exhibits a different phenomenon. Here the curve is fixed, while the singularity of the kernel is allowed to vary with $x$. After making the change of variables $t = s + c(x)$, we obtain
\begin{equation}\label{eq:1.4}
BSH_{c}^{\alpha}(f,g)(x) =  \int_{\mathbb{R}}f\left(x-c(x)-s\right)g\left(x+(s+c(x))^{\alpha}\right)\frac{ds}{s}.
\end{equation}
Equation \eqref{eq:1.4} shows that translating the kernel singularity is generally not a harmless modification: the translation simultaneously shifts the spatial argument of the first input and deforms the curved argument of the second input.

In the linear case $\alpha=1$, a constant shift $c$ can be absorbed into opposite translations of the two inputs. Indeed, writing
\begin{equation*}
    \tau_u f(y):=f(y-u),
\end{equation*}
we have
\begin{equation*}\label{eq:linear-shifted-BHT}
    BSH_c^1(f,g)
    =
    BHT_1\left(\tau_c f,\tau_{-c}g\right),
    \qquad c\in\mathbb{R}.
\end{equation*}
For a nonlinear curve, however, the quantity
$(s+c)^\alpha$ cannot generally be separated into a pure power of $s$ and a constant translation. For example, in the cubic case $\alpha = 3$,
\begin{equation*}\label{eq:cubic-shift}
    (s+c)^3 = s^3+3cs^2+3c^2s+c^3.
\end{equation*}
Therefore, translating the singularity of the cubic operator generates quadratic and linear terms. When $c=c(x)$, the coefficients of these lower-order terms become measurable functions of $x$. The operator
$BSH_c^3$ consequently combines curvature, variable coefficients, and translation of the kernel singularity within a single bilinear model.

The shifted formulation is naturally associated with the maximal operator
\begin{equation}\label{eq:1.5}
BSH_{\mathcal{C}}^{\alpha}(f,g)(x) := \sup_{c\in\mathcal{C}}\left| \int_{\mathbb{R}}f(x-t)g(x+t^{\alpha})\frac{dt}{t-c}\right|, 
\end{equation}
where $\mathcal{C} \subseteq \mathbb{R}$. A measurable selection $c=c(x)$ linearizes the supremum in \eqref{eq:1.5}. In contrast with Carleson--type operators, where the maximal parameter acts through modulation, the parameter $c$ in \eqref{eq:1.5} acts by translating the singular kernel. The study of $BSH_{c}^{\alpha}$ therefore leads to a translation-maximal problem in a bilinear curved setting.

The boundedness problem for the shifted maximal operator was formulated in \cite[Open Problem~2]{GaitanLie2}. Its parabolic specialization also connects directly with Open Problem~1 of the same work, which asks whether the boundedness results for the bilinear
Hilbert transform and maximal operator along $at+bt^\alpha$ extend to the quadratic case $\alpha=2$, for fixed $a \in \mathbb{R} \setminus \{-1\}$ and $b \in \mathbb{R}$, in the range
$$
\frac{1}{p} + \frac{1}{q} = \frac{1}{r},
\qquad p,q>1,
\qquad r>\frac{2}{3}.
$$
For the singular integral, this connection is expressed by a translation identity. Writing $\tau_s h(x):=h(x-s)$ and making the substitution $u=t-1$, we obtain
\begin{align*}
BSH_1^2(\tau_{-1}f,\tau_1g)(x)
&= \int_{\mathbb R}
f(x-t+1)\,g(x+t^2-1)\,\frac{dt}{t-1}\\
&= \int_{\mathbb R}
f(x-u)\,g(x+2u+u^2)\,\frac{du}{u}\\
&=H_{2,1}^2(f,g)(x).
\end{align*}
Since translations preserve Lebesgue norms, the fixed-shift operator
$BSH_1^2$ and the quadratic hybrid operator $H_{2,1}^2$ have identical $L^p\times L^q\to L^r$ boundedness properties. This motivates the study of $BSH_c^\alpha$: if one were able to show that $BSH_c^2$ was bounded, this would imply boundedness of the hybrid operator with quadratic resonances considered in \cite{GaitanLie2}.

Our contribution is a complete resolution of the Lebesgue-space boundedness of $BM_a^\alpha$ and $BSH_c^\alpha$, together with sharp $L^1$-endpoint counterexamples for both $BH_a^\alpha$ and $BM_a^\alpha$ having strictly positive, bounded, globally Lipschitz coefficients.

For $BH_a^\alpha$ and $BM_a^\alpha$ at the $r=1$ endpoint, a natural question is whether failure of $L^p \times L^q \to L^1$ boundedness can only be triggered by pathological behavior of the coefficient $a(x)$---such as $a(x)$ becoming unbounded, $a(x)$ approaching $0$ where the curvature degenerates, or $a(x)$ taking the resonant slope $-1$ when $\alpha = 1$. We show that none of these degeneracies is needed. For any $\alpha > 0$ and any prescribed positive interval $(A,B) \subset (0,\infty)$, we construct a single smooth, globally Lipschitz coefficient $a : \R \to (A,B)$ for which both $BH_a^\alpha$ and $BM_a^\alpha$ fail to satisfy any $L^p(\R) \times L^q(\R) \to L^1(\R)$ norm estimate on $C_c^\infty(\R) \times C_c^\infty(\R)$ for all $1 < p, q < \infty$ with $1/p + 1/q = 1$. Moreover, using an infinite train of disjoint smooth bumps, we construct nonnegative smooth inputs in $\bigcap_{s>1} L^s(\R)$, one of which lies in $C_c^\infty(\R)$, for which neither $BH_a^\alpha(f,g)$ nor $BM_a^\alpha(f,g)$ belongs to $L^1(\R)$. This $L^1$-endpoint failure may be compared with the directional Hilbert transform counterexample of Karagulyan \cite{Karagulyan}, in that both establish the failure of strong--type estimates for Hilbert transform--esque operators when the target exponent matches the dimension of the underlying target space. Boundedness of $BH_a^\alpha$ for $r > 1$ is left as an open problem in Conjecture \ref{open_problem}.

On the positive side, for $BM_a^\alpha$ with an arbitrary measurable coefficient $a:\R\to\R$, we prove strong--type $L^p(\R) \times L^q(\R) \to L^r(\R)$ boundedness for all $r > 1$, as well as weak--type $L^p(\R) \times L^q(\R) \to L^{1,\infty}(\R)$ boundedness at the endpoint $r = 1$, with operator norm bounds independent of $a$. The proof is elementary and rests on the pointwise estimate
$$
BM_a^\alpha(f,g)(x) \lesssim_{\alpha,s} \left(M(|f|^s)(x)\right)^{1/s} \left(M(|g|^{s'})(x)\right)^{1/s'}
$$
for H\"{o}lder conjugate exponents $s, s' > 1$. While the flat case ($\alpha = 1$, $a \equiv 1$) of this pointwise bound is classical (see e.g.\ \cite{LaceyBilinMax}), our modest contribution is to show that it persists for every $\alpha > 0$ and any measurable $a(x)$, with an implicit constant depending only on $\alpha$ and $s$. Combining our positive and negative results for $BM_a^\alpha$ reveals a somewhat surprising progression in the sharp strong--type target range for bilinear maximal averages: the flat constant-coefficient operator maps into $L^r(\R)$ for $r > 2/3$ \cite{LaceyBilinMax}, fixed non-flat monomial curvature restricts the strong--type range to $r \geq 1$ \cite{GaitanLie1}, and allowing a spatially varying Lipschitz coefficient $a(x) \in (A,B)$ removes the strong--type $L^1$ endpoint as well, leaving $r > 1$ (and weak $L^1$ at $r=1$) as the sharp range.

For $BSH_c^\alpha$, we prove that there exists a measurable function $c$ (even taking values in the dyadic integers $\{2^j : j \in \N\}$, or taking values in $[0,1]$) such that no strong or weak--type $L^p(\R) \times L^q(\R) \to L^r(\R)$ estimates hold for $1/2 < r < \infty$, resolving Open Problem~2 of Gaitan and Lie \cite{GaitanLie2}. This demonstrates that attacking the hybrid operator with quadratic resonances requires taking structural regularity of the shift into account rather than seeking uniform measurable-shift estimates.

We state our results formally in the following subsection.

\subsection{Main Results}

\begin{theorem}\label{thm:main1}
Fix $\alpha > 0$ and $0 < A < B < \infty$. There is a smooth, globally Lipschitz function $a : \R \to (A,B)$ such that, for every $1 < p, q < \infty$ with $1/p + 1/q = 1$, no finite constant $C = C(\alpha,p,q)$ satisfies
\begin{equation}\label{eq:BH-norm-failure}
\Vert BH_a^\alpha(f,g) \Vert_{L^1(\R)} \leq C \Vert f \Vert_{L^p(\R)} \Vert g \Vert_{L^q(\R)}, \qquad f, g \in C_c^\infty(\R).
\end{equation}
The coefficient $a$ is independent of the number of test bumps and of $p, q$. Moreover, there exist nonnegative smooth inputs $f, g \in \bigcap_{s > 1} L^s(\R)$, one of which is compactly supported, for which
$$
BH_a^\alpha(f,g) \not\in L^1(\R).
$$
\end{theorem}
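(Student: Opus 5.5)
The plan is to exploit the one freedom the operator has that no norm inequality can see: at each output point $x$, the coefficient $a(x)$ can steer the curve $t\mapsto x+a(x)t^\alpha$ so that it always lands on a prescribed small bump of $g$. One fixed bump $f$ will then interact with many disjoint bumps of $g$, each through its own long stretch of output points. The $L^1$ norm of the output grows linearly in the number $k$ of $g$-bumps, while $\|f\|_p\|g\|_q$ grows only like $k^{1/q}$; the gap $k^{1/p}\to\infty$ is what defeats \eqref{eq:BH-norm-failure}.

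\textbf{Construction of $f$ and $a$.} Take $f=\varphi$, a fixed nonnegative bump supported in $[-\delta,\delta]$. For $x$ large, $f(x-t)\neq 0$ forces $t\in[x-\delta,x+\delta]$, so $t>0$ and $t\approx x$. This has three consequences:
\begin{itemize}
\item the principal value plays no role;
\item the kernel $1/t$ is positive, of size $\approx 1/x$;
\item the convention for $t^\alpha$ with $t<0$ never enters.
\end{itemize}
Choose a very sparse sequence of scales $N_1\ll N_2\ll\cdots$, a fixed $a_0\in(A,B)$, and a small $\varepsilon>0$ depending only on $A$, $B$, $a_0$, $\alpha$. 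Put $y_j:=N_j+a_0N_j^\alpha$. On $I_j:=[N_j,(1+\varepsilon)N_j]$ define
\[
a(x):=\frac{y_j-x}{x^\alpha},
\]
so that $x+a(x)x^\alpha=y_j$ exactly on $I_j$. A Taylor check shows this $a$ stays within $O(\varepsilon)$ of $a_0$ on $I_j$, hence in $(A,B)$, and that its derivative is bounded uniformly in $j$. Away from the intervals $I_j$, interpolate smoothly back to $a_0$ using cutoffs at scale $\sim N_j$. This gives a single smooth, globally Lipschitz $a:\R\to(A,B)$, independent of $k$, $p$ and $q$.

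\textbf{Lower bound on each block.} For $x\in I_j$ and $t=x-s$ with $|s|\le\delta$, expand
\[
x+a(x)(x-s)^\alpha=y_j-\lambda_j(x)\,s+O\big(s^2x^{\alpha-2}\big),\qquad \lambda_j(x)\approx a_0\alpha N_j^{\alpha-1}.
\]
Let $g_j$ be a nonnegative bump of height $1$ adapted to $[y_j-\lambda\delta,\,y_j+\lambda\delta]$ (slightly enlarged to absorb the error term), where $\lambda=\sup_{I_j}\lambda_j$. Then for every $x\in I_j$,
\[
BH_a^\alpha(f,g_j)(x)\gtrsim \frac1{N_j}\int\varphi .
\]
Integrating over $I_j$ gives an $L^1$ contribution $\gtrsim \varepsilon\int\varphi$, independent of $j$.

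Now set $g=\sum_{j=1}^k g_j$. Sparseness of $(N_j)$ gives two facts:
\begin{itemize}
\item for $x$ in the support of $f(x-\cdot)$, the curve point lies in $\operatorname{supp}g_j$ only when $x$ is in a neighbourhood of $I_j$, so the blocks do not interact;
\item every term is nonnegative, so there is no cancellation.
\end{itemize}
Hence $\|BH_a^\alpha(f,g)\|_{L^1}\gtrsim k\varepsilon$. On the other side, $\|f\|_p\sim1$ and $\|g\|_q^q\sim\sum_j\lambda_j\delta$. When $\alpha\le1$ we have $\lambda_j\lesssim1$ and this is $\lesssim k$. When $\alpha>1$, I instead lower the height of $g_j$ to $\lambda_j^{-1/q}$; the lower bound then becomes $\int_{I_j}N_j^{-1}\lambda_j^{-1/q}\,dx$, which is no longer uniform in $j$.

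\textbf{Main obstacle: the normalization for $\alpha>1$.} This scaling mismatch is the step I expect to be hardest. The fix I would try first is to exploit the freedom in $\varepsilon$ and $a$ more cleverly. Rather than fixing $y_j$ on all of $I_j$, I would let $a$ make the curve sweep through a family of many $\delta$-bumps placed at $y_j+m\cdot(\text{spacing})$, with each $x$ paired to one bump. This rebalances $\|g\|_q$ against the measure of the good $x$-set. If that fails, I would choose $N_j$ bounded (all blocks at scale $\sim1$, but in disjoint windows), translating $f$ if necessary so that $\lambda_j\sim1$ throughout. Either way the target estimate is
\[
\|BH_a^\alpha(f,g)\|_{1}\gtrsim k,\qquad \|f\|_p\|g\|_q\lesssim k^{1/q},
\]
which contradicts \eqref{eq:BH-norm-failure} as $k\to\infty$.

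\textbf{Infinite version.} Take $k=\infty$ and give $g_j$ the amplitude $j^{-1}$ (with widths as above). Then $g\in\bigcap_{s>1}L^s$, since $\sum_j j^{-s}<\infty$ for every $s>1$, while $f=\varphi\in C_c^\infty$. The blockwise lower bounds, which are nonnegative and disjoint, sum to $\sum_j j^{-1}=\infty$. Hence $BH_a^\alpha(f,g)\notin L^1$.
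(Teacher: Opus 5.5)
For $\alpha=1$ your construction is the paper's: $a(x)=(1+a_0)N_j/x-1$ on $I_j$, unit-width $g$-bumps, a gain $\gtrsim\varepsilon\int\varphi$ per block, and amplitudes $j^{-1}$ for the infinite train. For $\alpha\neq1$, however, there are genuine gaps.

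\textbf{The case $0<\alpha<1$.} Here your ``Taylor check'' fails. With $x=(1+\delta)N_j$,
\[
a(x)=\frac{y_j-x}{x^\alpha}=\frac{a_0-\delta N_j^{1-\alpha}}{(1+\delta)^\alpha},
\]
which leaves $(A,B)$ as soon as $\delta\gg N_j^{\alpha-1}$. So the good interval can only have length $\sim N_j^{\alpha}$; this is the paper's $I_j=[R_j,R_j+\eta R_j^\alpha]$. The block gain is then only $\sim N_j^{\alpha-1}$, the same order as $\|g_j\|_q^q\sim\lambda_j\delta$. Your claimed gain $\gtrsim k\varepsilon$ against $\|g\|_q^q\lesssim\sum_j N_j^{\alpha-1}$ (bounded for sparse $N_j$) was too good to be true. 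With a very sparse sequence, both series converge and nothing blows up.

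Your bumps $g_j$ are the right ones; what must change is the spacing. You need disjoint blocks with $\sum_j N_j^{\alpha-1}=\infty$, as in the paper's $R_j=(Lj)^{1/(1-\alpha)}$, $w_j=R_j^{\alpha-1}=1/(Lj)$. This gives only $(\log N)^{1/p}$ growth. The infinite train then needs amplitudes $(\log(j+1))^{-1}$ rather than $j^{-1}$, since $\sum_j w_j\,j^{-1}<\infty$.

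\textbf{The case $\alpha>1$.} You leave this open, and no variant of your configuration (fixed $f=\varphi$ near $0$, train in $g$, outputs at large $x>0$) can close it. Take $x\in[X,2X]$ with $X\geq 2\delta$. The change of variables $y=x+a(x)(x-s)^\alpha$ gives $|BH_a^\alpha(\varphi,g)(x)|\lesssim x^{-\alpha}\int_{J(x)}|g|$, where $|J(x)|\lesssim \delta X^{\alpha-1}$. H\"older on the set $\{(x,y): X\le x\le 2X,\ y\in J(x)\}$ then yields
\[
\int_X^{2X}|BH_a^\alpha(\varphi,g)(x)|\,dx\lesssim \delta^{1/p}X^{(1-\alpha)/q}\|g\|_{L^q(\R)}
\]
for every $a$ with values in $(A,B)$, and this is summable over dyadic $X$. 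Sweeping through many narrow $g$-bumps cannot beat this bound. Translating $f$ block by block costs a factor $k^{1/p}$ in $\|f\|_p$, which cancels the gain.

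The missing idea is to swap the roles of the two inputs:
\begin{itemize}
\item put the train in $f$, with bumps of width $w_j=T_j^{1-\alpha}=1/(Lj)$ at $U_j=-a_*T_j^\alpha-T_j$;
\item keep one fixed $g$ near $0$;
\item take outputs $x\in[X_j,X_j+\eta T_j]$ with $X_j=-a_*T_j^\alpha$;
\item set $a(x)=-x/(x-U_j)^\alpha$, so that $x+a(x)(x-U_j)^\alpha=0$.
\end{itemize}
The curve then compresses each narrow $f$-bump onto the \emph{same} $g$-bump. The gain $\sim w_j$ per block is paid for only by $\|f_N\|_p^p\sim\sum_j w_j$, which gives $(\log N)^{1/q}$ growth. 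One must also check, under both conventions for $t^\alpha$, that $t<0$ and the other bumps contribute nothing. Your positive-$t$ setup never had to address this.
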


\begin{theorem}\label{thm:main2}
For all $\alpha > 0$, any measurable function $a: \R \to \R$, and any exponents satisfying
$$
1 < p,q \leq \infty, \qquad r > 1, \qquad \frac{1}{r} = \frac{1}{p} + \frac{1}{q},
$$
the operator $BM_a^\alpha$ is bounded from $L^p(\R) \times L^q(\R)$ into $L^r(\R)$: there is a constant $C = C(\alpha,p,q)$, independent of $a$, such that
$$
\Vert BM_a^\alpha(f,g) \Vert_{L^r(\R)} \leq C \Vert f \Vert_{L^p(\R)} \Vert g \Vert_{L^q(\R)}.
$$
Furthermore, for any H\"{o}lder conjugate pair $1 \leq p, q \leq \infty$ with $1/p + 1/q = 1$, $BM_a^\alpha$ is bounded from $L^p(\R) \times L^q(\R)$ into $L^{1,\infty}(\R)$, again with a constant $C = C(\alpha,p,q)$ independent of $a$.

On the other hand, at the strong--type $r=1$ endpoint, fix $\alpha > 0$ and $0 < A < B < \infty$. For the same smooth, globally Lipschitz function $a : \R \to (A,B)$ from Theorem \ref{thm:main1} and every $1 \leq p, q \leq \infty$ with $1/p + 1/q = 1$, no finite constant $C = C(\alpha,p,q)$ satisfies
\begin{equation}\label{eq:BM-norm-failure}
\Vert BM_a^\alpha(f,g) \Vert_{L^1(\R)} \leq C \Vert f \Vert_{L^p(\R)} \Vert g \Vert_{L^q(\R)}, \qquad f, g \in C_c^\infty(\R).
\end{equation}
Moreover, there exist nonnegative smooth inputs $f, g \in \bigcap_{s > 1} L^s(\R)$, one of which is compactly supported, for which
$$
BM_a^\alpha(f,g) \not\in L^1(\R).
$$
\end{theorem}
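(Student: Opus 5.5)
The positive part will follow from a pointwise domination by Hardy--Littlewood maximal functions; the negative part will reuse the construction from Theorem \ref{thm:main1}.

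\textbf{Step 1: the pointwise bound.} Fix $x$ and $r>0$, and write $u=a(x)$. If $u=0$, then $g(x)$ is a constant factor. The average is then at most $|g(x)|\,Mf(x)$, and Lebesgue points handle this case almost everywhere; at the integrability level one can also bound it as below using $s'=\infty$. For $u\neq0$, apply H\"older with exponents $s,s'$ to get
$$\frac{1}{2r}\int_{-r}^r|f(x-t)g(x+ut^\alpha)|\,dt\le\Big(\frac{1}{2r}\int_{-r}^r|f(x-t)|^s\,dt\Big)^{1/s}\Big(\frac{1}{2r}\int_{-r}^r|g(x+ut^\alpha)|^{s'}\,dt\Big)^{1/s'}.$$
The first factor is at most $M(|f|^s)(x)^{1/s}$. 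For the second, substitute $y=ut^\alpha$ on $t>0$, and similarly on $t<0$ under either convention, so that $dt=\alpha^{-1}|u|^{-1/\alpha}|y|^{1/\alpha-1}\,dy$ with $|y|\le|u|r^\alpha=:R$. This gives
$$\frac{1}{2r}\int_0^r|g(x+ut^\alpha)|^{s'}dt=\frac{1}{2\alpha R^{1/\alpha}}\int_0^{R}|g(x\pm y)|^{s'}\,y^{1/\alpha-1}\,dy.$$
When $\alpha\le1$ the weight $y^{1/\alpha-1}\le R^{1/\alpha-1}$ is bounded, so this is $\lesssim_\alpha M(|g|^{s'})(x)$.

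When $\alpha>1$ the weight $y^{1/\alpha-1}$ is decreasing and integrable near $0$. The normalized weight $\alpha^{-1}R^{-1/\alpha}y^{1/\alpha-1}\mathbf 1_{[0,R]}$ has $L^1$ norm $1$ and is radially decreasing. The standard majorization by averages over dyadic intervals (layer-cake) therefore bounds the integral by $M(|g|^{s'})(x)$.

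This gives $BM_a^\alpha(f,g)\lesssim_{\alpha,s}M(|f|^s)^{1/s}M(|g|^{s'})^{1/s'}$ uniformly in $a$. Measurability of $BM_a^\alpha(f,g)$ follows by restricting the supremum to rational $r$, using continuity of the averages in $r$.

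\textbf{Step 2: strong and weak type.} Given $1<p,q\le\infty$ with $r>1$, we have $1/p+1/q<1$. Choose $s$ with $1<s<p$ and $s'<q$, which is possible exactly because $1/p+1/q<1$. Then $M(|f|^s)^{1/s}$ is bounded on $L^p$ and $M(|g|^{s'})^{1/s'}$ is bounded on $L^q$. H\"older's inequality gives the $L^r$ bound with a constant independent of $a$.

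At $1/p+1/q=1$ with $1<p,q<\infty$, take $s=p$ and $s'=q$. The weak $(1,1)$ bound for $M$ shows $M(|f|^p)^{1/p}\in L^{p,\infty}$ and $M(|g|^q)^{1/q}\in L^{q,\infty}$. H\"older's inequality for weak spaces, $\|FG\|_{L^{1,\infty}}\lesssim\|F\|_{L^{p,\infty}}\|G\|_{L^{q,\infty}}$, gives the weak-type $L^1$ bound.

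The endpoint $p=1$, $q=\infty$ is handled directly: $BM_a^\alpha(f,g)\le\|g\|_\infty Mf$, and $Mf$ is weak $(1,1)$. The case $p=\infty$, $q=1$ uses the one-sided bound $BM_a^\alpha(f,g)\le\|f\|_\infty\cdot\sup_r\frac1{2r}\int_{-r}^r|g(x+ut^\alpha)|\,dt$. This is the $s'=1$ version of the computation in Step 1, which also gives $\lesssim_\alpha Mg$ (for $\alpha\le 1$ by bounded weight, for $\alpha>1$ by the radial-decreasing majorization). The weak $(1,1)$ bound for $M$ then applies.

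\textbf{Step 3: failure at $r=1$.} I expect this to be the main obstacle, and I would reduce it to the construction behind Theorem \ref{thm:main1}. That construction produces bumps $f_k,g_k$ and a coefficient $a$ such that on disjoint sets $E_k$ the curve $t\mapsto(x-t,x+a(x)t^\alpha)$ passes through $\operatorname{supp}f_k\times\operatorname{supp}g_k$ for $t$ in a controlled range of scale $\sim r_k$, while $\|f_k\|_p\|g_k\|_q$ is normalized. The lower bound for $BH_a^\alpha$ in that construction comes from positivity: the nonnegative integrand is concentrated where $t$ has a fixed sign and size, so $|BH|\gtrsim\frac1{r_k}\int f_k g_k$ on $E_k$. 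The same quantity bounds $\frac1{2r}\int_{-r}^r f_kg_k$ from below, taking $r\sim r_k$. Hence $BM_a^\alpha(f_k,g_k)\ge c\,|BH_a^\alpha(f_k,g_k)|$ on $E_k$, and the divergent $L^1$ sums transfer verbatim. This yields the failure of \eqref{eq:BM-norm-failure} for $1<p,q<\infty$, together with the infinite-train inputs.

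The cases $p=1$, $q=\infty$ and $p=\infty$, $q=1$ I would handle by normalizing the same bumps in $L^1\times L^\infty$, using $\|g_k\|_\infty\lesssim1$ and $\|f_k\|_1$ small relative to the measure of $E_k$. The delicate point is to check that the constructed $a$ produces a lower bound that is uniform in these extreme exponents. If it does not, I would rescale the bump widths separately in these cases while keeping $a$ fixed.
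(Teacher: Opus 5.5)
\textbf{Gap: the endpoints $(p,q)=(1,\infty)$ and $(\infty,1)$.} Your positive part is correct and is essentially the paper's argument: the pointwise bound by $M(|f|^s)^{1/s}M(|g|^{s'})^{1/s'}$, H\"older with $q'<s<p$, and weak-type H\"older. Your transfer of the Theorem \ref{thm:main1} lower bounds to $BM_a^\alpha$, for $1<p,q<\infty$ and for the infinite trains, is also the paper's argument. It takes $r=x$ when $\alpha\le1$ and $r=2T_j$ when $\alpha>1$, using that the integrand is nonnegative and lives where $t$ is comparable to that scale. The gap is the strong-type failure at $(p,q)=(1,\infty)$ and $(\infty,1)$, which the statement asserts and which you leave unresolved.

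In Theorem \ref{thm:main1} only one input is a bump train (the second when $\alpha\le1$, the first when $\alpha>1$), and the other is a single fixed bump. The lower bound is $\|BM_a^\alpha\|_{L^1}\gtrsim\sum_{j\le N}w_j$, with $w_j\equiv1$ if $\alpha=1$. The train has norm $\simeq(\sum_{j\le N}w_j)^{1/s}$, where $s$ is the exponent of the slot carrying it. The whole gain is therefore the factor $(\sum_{j\le N}w_j)^{1-1/s}$, which is trivial when $s=1$. Hence the trains give $(1,\infty)$ only for $\alpha\le1$, and $(\infty,1)$ only for $\alpha>1$. Rescaling the bump widths, as you suggest, does not help: it changes the $w_j$, but output and input still both scale like $\sum_j w_j$ when the train sits in the $L^1$ slot.

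\textbf{Missing idea.} In these two cases, drop the special geometry of $a$ and use scale invariance instead. Put a single fixed bump in the $L^1$ slot, and a flat-top cutoff of height $1$ but growing support in the $L^\infty$ slot.

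For $(1,\infty)$, take $f\in C_c^\infty((0,1))$ and $g_M=1$ on $[0,M+B\,M^\alpha]$. For $2<x<M$, choosing $r=x$ gives $BM_a^\alpha(f,g_M)(x)\ge m_f/(2x)$.

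For $(\infty,1)$, take $g\in C_c^\infty((0,1))$ and $f_M=1$ on $[-(2M/A)^{1/\alpha}-M,0]$. For $-M<x<-2$, choose $r_x=((1-x)/A)^{1/\alpha}$ and substitute $y=x+a(x)t^\alpha$, whose Jacobian $dt/dy$ is $\simeq_{A,B,\alpha}|x|^{1/\alpha-1}$. This gives $BM_a^\alpha(f_M,g)(x)\gtrsim\|g\|_{L^1}/|x|$.

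Integrating over $2<|x|<M$ yields $\|BM_a^\alpha\|_{L^1}\gtrsim\log M$ with bounded input norms. This works for every measurable $a$ with values in $(A,B)$, in particular for the coefficient of Theorem \ref{thm:main1}.
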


\begin{theorem}\label{thm:main3}
For all $\alpha > 0$ there is a choice of measurable function $c:\R \to \R$ such that no finite constant $C = C(c,\alpha,p,q)$ exists for which
$$
\Vert BSH_c^\alpha(f,g) \Vert_{L^r(\R)} \leq C \Vert f \Vert_{L^p(\R)} \Vert g \Vert_{L^q(\R)}, \qquad \frac{1}{r} = \frac{1}{p} + \frac{1}{q}, \qquad \frac{1}{2} < r < \infty, \quad 1 < p,q \leq \infty
$$
holds for all $f,g \in \mathcal{S}(\R)$.
\end{theorem}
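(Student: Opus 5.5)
The plan is to construct $c$ so that, at each output point $x$, the singularity $t=c(x)$ lands exactly where the integrand $f(x-t)g(x+t^\alpha)$ is large and one-signed on one side. The principal value then picks up a logarithmically large contribution. The core mechanism is this: for fixed $x$ and nonnegative bump inputs, the function $t\mapsto F_x(t):=f(x-t)g(x+t^\alpha)$ is a nonnegative bump supported on some interval $I_x$. Choosing $c(x)$ to be an endpoint of $I_x$, or a point just outside it at distance $\varepsilon|I_x|$, gives
\begin{equation*}
\left|\int F_x(t)\,\frac{dt}{t-c(x)}\right| \gtrsim \log(1/\varepsilon)\,\frac{1}{|I_x|}\int F_x .
\end{equation*}
Since $c$ is only required to be measurable and may depend on the test functions' scales, the aim is a single $c$ for which these logarithmic gains are unbounded across a family of test pairs.

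Concretely, I would take $f=\mathbf 1_{[0,\delta]}$-type smooth bumps and $g$ a bump of width $\delta'$ near a point $y_0$, chosen so that the set of $x$ where $F_x\neq 0$ has measure comparable to $\delta$. For such $x$, the support $I_x$ is an interval of length about $\min(\delta,\delta'/|\alpha t_0^{\alpha-1}|)$ near some $t_0=t_0(x)$, and this can be computed explicitly. The dyadic/$[0,1]$ requirement is a refinement I would postpone; first I would use an arbitrary measurable $c$. To obtain one $c$ that works for all scales, I would place test configurations $(f_k,g_k)$ on disjoint spatial blocks $E_k$ (for instance $x\in[k,k+1]$, or dyadically separated regions) and define $c$ on $E_k$ by the endpoint rule adapted to $(f_k,g_k)$ with $\varepsilon_k\to0$. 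Restricting attention to $x\in E_k$ then gives
\begin{equation*}
\|BSH_c^\alpha(f_k,g_k)\|_{L^{r,\infty}} \gtrsim \log(1/\varepsilon_k)\,|E_k'|^{1/r}\cdot(\text{average size}),
\end{equation*}
where $E_k'$ is the set of $x$ where the configuration is active. This is compared with $\|f_k\|_p\|g_k\|_q$.

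A purely logarithmic gain would suffice for the blow-up if the remaining scale ratio were neutral. The range $r>1/2$ suggests, however, that one should instead exploit a power gain: take $c(x)$ as a ``distant'' singularity so that the output is nonzero on a large set. For $|t-c(x)|\sim \rho$, the value is about $\rho^{-1}\int F_x$, and by letting $c(x)$ track the support, the output is of size about $\delta\delta'/\rho$ on a set of $x$ of measure $L\gg\delta$. Here $x$ is allowed to range far away while $c(x)\approx x-x_f$ keeps $x-t$ inside $\operatorname{supp} f$, and $t^\alpha\approx y_0-x$ must hold simultaneously. This consistency requirement is exactly the obstruction: $c(x)$ can satisfy one constraint but the bump only occupies $t$-windows where both constraints meet. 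So I would choose $f,g$ as long trains of bumps, or use $f$ of width $1$ and $g$ narrow, arranged so that for each $x$ in a long interval some $t$ near a prescribed $c(x)$ makes $F_x$ a large positive bump. Then $c(x)$ sits at the edge of that bump, giving $|BSH|\gtrsim 1$ on a set of measure $N$ while $\|f\|_p\|g\|_q\sim N^{1/p}N^{1/q}\cdot(\ldots)$. Balancing the supports should produce a ratio like $N^{1/r-1/p-1/q+\text{gain}}$, with the gain coming from the log, or from widths that make the output measure exceed the Hölder prediction, valid precisely when $r>1/2$.

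The main obstacle is this geometric bookkeeping: designing the test families so that the region of active $x$ has measure much larger than Hölder scaling predicts, which I expect to be the source of the threshold $r>1/2$ (two inputs each contributing at most a length-one spatial constraint). It also has to work uniformly for all $(p,q)$ with the same $c$. Finally, I would verify that the contributions from $t<0$ and from the principal-value tail do not cancel the main term, by making the inputs nonnegative and the chosen side of the singularity one-signed.
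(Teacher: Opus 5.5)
The mechanism in your first paragraph is the right one, and it is what the paper uses in the proof of Proposition~\ref{prop:unit-interval-shift}: put the singularity just to the left of the support of a nonnegative integrand and collect $\log(1/\varepsilon)$. As written, though, the proposal does not reach a proof, and it stalls on an obstacle that is not there.

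(i) Your displayed bound $\bigl|\int F_x(t)\,\frac{dt}{t-c(x)}\bigr|\gtrsim\log(1/\varepsilon)\,|I_x|^{-1}\int F_x$ is false for fixed smooth inputs. Such an $F_x$ vanishes at least linearly at an endpoint $t_1\neq0$ of its support, so the integral stays bounded as $c(x)\uparrow t_1$. You need $F_x\gtrsim1$ on $[t_1+\varepsilon|I_x|,\,t_1+\tfrac12|I_x|]$. That means the test input itself must be indexed by $\varepsilon$, with a flat top reaching within $\varepsilon$ of its support edge; this costs nothing in $L^p$.

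(ii) With that built in, the logarithmic gain alone finishes the argument. The paper's choices are:
\begin{itemize}
\item $g\in C_c^\infty$ with $g\equiv1$ on $[0,1+2^\alpha]$;
\item $0\le f_N\le1$ in $C_c^\infty((-1,0))$ with $f_N\equiv1$ on $[-\tfrac12,-e^{-N^2}]$;
\item the single function $c(x)=x-e^{-1/x^2}$ on $(0,\tfrac12)$.
\end{itemize}
For $0<x<1/N$, $f_N(x-t)\neq0$ forces $t>x>c(x)$, so the integrand is nonnegative. Writing $t=x+s$ gives $BSH_c^\alpha(f_N,g)(x)\ge\int_{e^{-N^2}}^{1/2}\frac{ds}{s+e^{-N^2}}\ge N^2/4$ for large $N$. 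Hence $\|BSH_c^\alpha(f_N,g)\|_{L^{r,\infty}}\gtrsim N^{2-1/r}\to\infty$, while $\|f_N\|_p\|g\|_q\lesssim1$.

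(iii) The condition $r>1/2$ is not a threshold that calls for a power gain or extra geometry. For $1<p,q\le\infty$ it follows automatically from $1/r=1/p+1/q<2$, and in the argument above it is simply what makes $N^{2-1/r}$ diverge. Your blockwise scheme also closes on its own. Take $c(x)=x-\varepsilon_k$ on $[k,k+1]$ with $\varepsilon_k=\exp(-e^k)$, and inputs $f_k$ as above and $g_k\equiv1$ on $[0,k+2+(k+2)^\alpha]$. This gives a gain $\gtrsim e^k$ on a set of measure $1$, against input norms that grow only polynomially in $k$.

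(iv) Your power-gain sketch, with the bookkeeping $\|f\|_p\|g\|_q\sim N^{1/p}N^{1/q}$ you wrote, is scale-neutral. If both inputs are trains of $N$ unit bumps, that product is $N^{1/r}$, which exactly matches an output $\gtrsim1$ on a set of measure $N$, so there is no blow-up. The idea that makes this route work (Proposition~\ref{prop:dyadic}) is to keep one input a single fixed bump and set $c(x)=c_j$ on the $j$th output block. Then $x-t\approx0$ stays in the fixed bump of $f$, while $x+t^\alpha\approx c_j+c_j^\alpha$ picks out the $j$th bump of $g$; the roles of $f$ and $g$ are swapped when $0<\alpha<1$. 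Only one norm grows, and the ratio is $N^{1/r-1/q}=N^{1/p}$.
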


We may deduce Theorem \ref{thm:main3} from either of the following two propositions.

\begin{proposition}\label{prop:unit-interval-shift}
Let $\alpha>0$. Suppose that
$$
1<p,q\leq\infty,
\qquad
\frac12<r<\infty,
\qquad
\frac1p+\frac1q=\frac1r.
$$
Then the maximal shifted operator
$$
BSH_{[0,1]}^\alpha(f,g)(x)
:=
\sup_{c\in[0,1]}
\left| \int_{\mathbb R}
f(x-t)g(x+t^\alpha)\,\frac{dt}{t-c}
\right|
$$
does not satisfy an estimate of the form
\begin{equation}\label{eq:unit-interval-weak-bound}
\|BSH_{[0,1]}^\alpha(f,g)\|_{L^{r,\infty}(\mathbb R)}
\leq
C
\|f\|_{L^p(\mathbb R)}
\|g\|_{L^q(\mathbb R)}
\end{equation}
with a constant $C$ independent of $f$ and $g$. In particular,
$$
BSH_{[0,1]}^\alpha:
L^p(\mathbb R)\times L^q(\mathbb R)
\textit{ is not bounded into }
L^r(\mathbb R).
$$
\end{proposition}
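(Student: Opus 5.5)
We prove Proposition~\ref{prop:unit-interval-shift} by exhibiting a family of inputs for which the maximal shifted operator is large on a set whose measure does not shrink fast enough. The idea is that the supremum over $c\in[0,1]$ lets us move the kernel singularity onto a point where the integrand is concentrated. Then $\int f(x-t)g(x+t^\alpha)\,\frac{dt}{t-c}$ behaves like a non-integrable $1/|t-c|$ singularity rather than a cancelling one.

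\textbf{The construction.} Fix a small $\delta>0$ and take
\[
f=\mathbf 1_{[-\delta,\delta]},\qquad g=\mathbf 1_{I},
\]
where $I$ is an interval of length about $1$, chosen below.

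\emph{Step 1: the support of the integrand.} For $x$ in a set $E$, the factor $f(x-t)$ restricts $t$ to the short window $[x-\delta,x+\delta]$. We want this window inside $(0,1]$, so take $E\subset[1/4,3/4]$ with $|E|\sim 1$. The curve $t\mapsto x+t^\alpha$ on that window has bounded derivative $\alpha t^{\alpha-1}\sim 1$ there. So if $I\supset\{x+t^\alpha: x\in E,\ t\in[1/8,7/8]\}$, then $g(x+t^\alpha)=1$ throughout the window. With this choice $|I|\lesssim 1$, and the integrand is exactly $\mathbf 1_{[x-\delta,x+\delta]}(t)/(t-c)$.

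\emph{Step 2: choosing the shift.} Choose $c=x-\delta\in[0,1]$, putting the singularity at the endpoint of the window. Since the integrand has a single sign on the window (away from the singularity), there is no cancellation, and the principal value diverges: $\int_{x-\delta}^{x+\delta}\frac{dt}{t-c}=+\infty$.

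To keep everything finite and rigorous, replace this choice by $c=x-\delta-\eta$ with a small $\eta>0$. Alternatively, smooth $f$ and $g$ into Schwartz bumps equal to $1$ on the relevant sets. Either way the value is $\log\frac{2\delta+\eta}{\eta}$, which is $\geq \log(\delta/\eta)\to\infty$ as $\eta\to0$. Hence for any prescribed $N$,
\[
BSH_{[0,1]}^\alpha(f,g)(x)\ge N\quad\text{for every } x\in E.
\]

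\emph{Step 3: comparing norms.} This gives
\[
\|BSH_{[0,1]}^\alpha(f,g)\|_{L^{r,\infty}}\ \ge\ N|E|^{1/r}\ \gtrsim\ N,
\]
while $\|f\|_{L^p}\|g\|_{L^q}\lesssim \delta^{1/p}$ is bounded independently of $N$. Letting $N\to\infty$ contradicts \eqref{eq:unit-interval-weak-bound} for every exponent triple in the stated range; the restriction $r>1/2$ is not even needed.

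\textbf{Main obstacle.} The delicate point is Step 2: justifying that the supremum is realized by an \emph{admissible} $c\in[0,1]$ and that the principal value is well defined near the point $t=c$.

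If the singularity sits exactly at an endpoint of the support, the principal value is genuinely infinite. For smooth Schwartz data, a singularity placed in the interior of the window would instead give a finite principal value. So the plan is to place $c$ \emph{just outside} the window, at distance $\eta$. This makes the integral an honest absolutely convergent Lebesgue integral of a positive function, equal to $\log(1+2\delta/\eta)$ up to smoothing errors.

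With smoothed bumps the lower bound is $\gtrsim\log(\delta/\eta)$, and these errors are bounded by $\int_{|t-c|\ge\eta}$ of the tail of the bump. That is $O(1)$ provided the bump transitions over a length comparable to $\eta$ while its plateau has length $\delta$. Its $L^p$ norm stays $\lesssim\delta^{1/p}$, so the contradiction survives.

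Finally, with $\alpha$ nonintegral, only $t\in(0,1]$ matters, since $f(x-t)$ vanishes for $t<0$ when $x\in E$. The convention for $t^\alpha$ on the negative half-line is therefore irrelevant, and the argument is uniform in $\alpha>0$.
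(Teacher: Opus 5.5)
Your argument is correct, and it rests on the same mechanism as the paper's proof. In both, the first input is a family with a plateau ending in a transition of width $\eta$ (the paper's $f_N$ equals $1$ on $[-\tfrac12,-\varepsilon_N]$ and is supported in $(-1,0)$), with uniformly bounded $L^p$ norms. The shift is placed just to the left of the support of $t\mapsto f(x-t)$, so the integrand is one-signed and $1/(t-c)$ produces a logarithm of $1/\eta$.

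The difference is in how $c$ is chosen.
\begin{itemize}
\item The paper uses the single, $N$-independent selector $c_x=x-e^{-1/x^2}$. It matches the transition scale only on $(0,1/N)$, so the lower bound $N^2/4$ holds on a set of measure $1/N$. The weak norm is then $\gtrsim N^{2-1/r}$, which is exactly where $r>1/2$ enters.
\item You let $c=x-\delta-\eta$ depend on both $x$ and $\eta$, which the supremum over $c\in[0,1]$ permits. The blow-up then occurs on a set $E$ of measure $\sim1$, and the estimate fails for every $0<r<\infty$. This is a stronger statement than the proposition.
\end{itemize}
What the paper's choice buys is one measurable function $c$ that works for all $N$. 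That is what is needed to deduce Theorem~\ref{thm:main3} for the linearized operator $BSH_c^\alpha$. It also exhibits $c'(x)-1\to0$ without vanishing, as discussed after Conjecture~\ref{open_problem2}. Your $\eta$-dependent shift does not directly give such a selector.

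One point needs tightening. For a fixed smooth pair $(f,g)$ the supremum over $c$ is finite. So your claim in Step 2 that ``either way the value is $\log\frac{2\delta+\eta}{\eta}$'' for Schwartz bumps is only true if the smoothing depends on $\eta$. Your final paragraph does recognize this. However, the error analysis there is vague: if the bump straddles $c$, you also need to control the local principal-value term near $t=c$.

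All of this is avoidable. Take $0\le f_\eta\le1$ smooth, with $f_\eta=1$ on $[-\delta,\delta]$ and $\supp f_\eta\subset[-\delta-\eta,\delta+\eta]$, and set $c=x-\delta-\eta$. Then the integrand vanishes for $t<c$ and is nonnegative elsewhere, so the lower bound $\log\frac{2\delta+\eta}{\eta}$ holds exactly with no error terms, just as in the paper.
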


\begin{proposition}\label{prop:dyadic}
Let $\mathcal D=\{2^j:j\in\mathbb N\}$. For every $\alpha>0$ there
exists a measurable function $c:\mathbb R\to\mathcal D$\footnote{This
dyadic model was suggested by Lie to the second author in personal
communication as a possible setting for bilinear boundedness estimates.}
such that, for every $1\leq p,q\leq\infty$ with $1/r=1/p+1/q$, the
estimate
$$
 \|BSH_c^\alpha(f,g)\|_{L^r(\mathbb R)}
 \leq C\|f\|_{L^p(\mathbb R)}\|g\|_{L^q(\mathbb R)}
$$
fails for every finite $C = C(c,\alpha,p,q)$. If $r < \infty$, the
estimate fails even with $L^r(\R)$ replaced by $L^{r,\infty}(\R)$.
All counterexamples use inputs in $C_c^\infty(\mathbb R)$, and the
same measurable function $c$ works for every exponent pair.
\end{proposition}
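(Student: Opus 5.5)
The plan is to build $c$ from a disjoint family of test configurations, one for each scale $2^j$, and to exploit the fact that shifting the singularity to $t=c(x)=2^j$ puts a large nonintegrable weight $1/(t-2^j)$ exactly where the inputs are concentrated.

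Fix $j\in\mathbb N$ and write $N=2^j$. Take $f=f_{j}$ a smooth bump of height $1$ adapted to a short interval of length $\delta$ around some point $y_0$, and $g=g_j$ a smooth bump adapted to an interval around $z_0$. For $x$ in a target interval $I_j$, choose the pair $(y_0,z_0)$ so that $x-N\approx y_0$ and $x+N^\alpha\approx z_0$. To make both conditions hold for all $x\in I_j$ simultaneously, take $I_j$ of length $\delta$ and make $g$ wide enough (length $\sim\delta(1+\alpha N^{\alpha-1})$) to cover the image of $t\mapsto x+t^\alpha$ as $t$ ranges over $|t-N|\le\delta$. Then
$$BSH_c^\alpha(f,g)(x)=\operatorname{p.v.}\int f(x-t)g(x+t^\alpha)\frac{dt}{t-N}.$$
The principal value cancels near $t=N$ when the integrand is even about $N$. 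So the first task is to break that symmetry: take $f$ supported so that $f(x-t)$ lives on $t\in[N,N+\delta]$ only, for $x$ in a subinterval. The integral is then $\int_N^{N+\delta}$ of a positive function times $dt/(t-N)$. This diverges logarithmically unless $f$ vanishes at the endpoint, and with smooth $f$ it gives size $\gtrsim\log(\delta/\epsilon)$, where $\epsilon$ is the distance from $x-N$ to the edge of $\operatorname{supp}f$. Equivalently one can take $f=\mathbf 1_{[0,\delta]}$ smoothed at scale $\epsilon$. I expect a lower bound of the form $|BSH(f,g)(x)|\gtrsim\log(\delta/\operatorname{dist}(x,\partial))$ on a set of measure $\sim\delta$. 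The sign is constant because both inputs are nonnegative and the kernel is positive on $t>N$; contributions with $t<0$ vanish under either convention for $t^\alpha$ once the supports are chosen far away.

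The norms are $\|f\|_p\sim\delta^{1/p}$ and $\|g\|_q\lesssim(\delta N^{\alpha-1}+\delta)^{1/q}$, while $\|BSH\|_{L^{r,\infty}}\gtrsim \lambda\,|\{\log(\delta/\operatorname{dist})>\lambda\}|^{1/r}$. A single log singularity is not enough, since it lies in every $L^{r,\infty}$. The construction therefore has to be multiplied. Place $K$ copies of the configuration: translate $f$ by points $y_k$ and use the free choice of $c(x)\in\mathcal D$ at each $x$ to select, from a single $f=\sum_k f_k$ and a single $g$, the scale $N_k=2^{j_k}$ with $y_k+N_k\approx x$. Then $g$ of one fixed size serves many output points. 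The key mechanism is that with $c$ measurable and the shifts ranging over all of $\mathcal D$, the output set can be spread over intervals $x\in y_k+N_k$, which are pairwise disjoint and far apart, while $g$ stays a single bump around $z_0$, with $x+N_k^\alpha\approx z_0$ enforced by choosing $y_k=z_0-N_k^\alpha-N_k$. This gives output of size $\gtrsim1$ on a set of measure $\sim K\delta$, against $\|f\|_p\sim(K\delta)^{1/p}$ and $\|g\|_q\sim\delta_g^{1/q}$, where $g$ must be wide enough for every $k$. The ratio of the two sides is then $\gtrsim (K\delta)^{1/r-1/p}\delta_g^{-1/q}=K^{1/q}(\delta/\delta_g)^{1/q}$ when $\delta_g$ is independent of $K$. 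This tends to infinity, which handles all $q<\infty$. For $q=\infty$ I would interchange the roles, with one $f$ bump and many $g$ bumps, arranging $x-N_k$ to be fixed. For $p=q=\infty$ the weak-type statement is excluded (then $r=\infty$), and the strong $L^\infty$ failure comes from the logarithmic singularity itself.

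The main obstacle is controlling $\delta_g$ uniformly in $k$, since the curve's derivative $\alpha N_k^{\alpha-1}$ varies with $k$. I would handle this by taking output intervals of length $\delta_k=\delta N_k^{1-\alpha}$ (for $\alpha>1$), so that each image interval of $g$ has length $\sim\delta$. The $f$ bumps then have size $\delta_k$, and the ratio becomes $(\sum_k\delta_k)^{1/r-1/p}\delta^{-1/q}\to\infty$ provided $\sum_k\delta_k=\infty$. That sum diverges only for $\alpha\le1$, so for $\alpha>1$ I would instead exploit the logarithmic blow-up, taking $\lambda\sim\log$ heights to gain the missing growth, or reuse the same $N$ many times via translates in $z_0$. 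Fixing these parameters is the delicate bookkeeping. Finally, a single $c$ for all exponents is obtained by concatenating the configurations for $K=1,2,\dots$ on disjoint far-separated regions and defining $c$ piecewise.
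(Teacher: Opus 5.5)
You have the right mechanism, and it is the paper's: one input is held fixed and shared across the dyadic scales $N_k$, the other is a train of $K$ disjoint bumps, and the output is $\gtrsim 1$ on $K$ disjoint pieces. This gives a ratio $K^{1/q}$ (train in $f$) or $K^{1/p}$ (train in $g$), and the logarithmic singularity handles $p=q=\infty$. As written, this covers $0<\alpha\le 1$, $q=\infty$, and $p=q=\infty$.

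\textbf{The gap.} For $\alpha>1$ and $q<\infty$ (which includes $p=q=2$) you have no argument. You flag the ``main obstacle'' and defer it to ``delicate bookkeeping,'' and neither proposed remedy works:
\begin{enumerate}
\item Reusing one scale $N$ via translates gains nothing. With $c(x)=N$ and a $g$-bump at $z_0$, output $\gtrsim1$ is possible only for $x$ near $z_0-N^\alpha$, which needs its own $f$-bump at $z_0-N^\alpha-N$. So $M$ copies cost $M$ bumps in each input, and the ratio is independent of $M$. Growth comes only from one input being shared by \emph{different} scales.
\item Large logarithmic values occur only on sets of exponentially small measure, by the dilation invariance of $dt/(t-N)$. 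Hence $\lambda\,|\{|BSH_c^\alpha(f,g)|>\lambda\}|^{1/r}$ gains nothing either.
\end{enumerate}

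\textbf{The missing idea.} The obstacle comes from your requirement that $\supp g$ contain the image under $t\mapsto x+t^\alpha$ of the whole window $|t-N|\le\delta$ cut out by $f$. That requirement is unnecessary. Instead, let the fixed $g$ be the narrow input, let the $f$-train be flat on unit-size windows, and let $g$ select a tiny $t$-window.

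Concretely, take $g=\psi\in C_c^\infty((2,3))$ and output points $x=-N_k^\alpha+y$, $0\le y\le1$, with $c(x)=N_k$. Put $z=x+t^\alpha$. For $t>0$, $g(x+t^\alpha)\neq0$ forces $t^\alpha-N_k^\alpha=z-y\in(1,3)$, so $0<t-N_k\le 3/(\alpha N_k^{\alpha-1})$. The mean value theorem (with $\theta\in(N_k,t)$) then gives
$$\frac{dt}{t-N_k}=\Big(\frac{\theta}{t}\Big)^{\alpha-1}\frac{dz}{z-y}\ge\frac{2^{1-\alpha}}{3}\,dz .$$
So the output is $\ge\frac{2^{1-\alpha}}{3}\int\psi$, provided $f\equiv1$ on $[-N_k^\alpha-N_k-1,\,-N_k^\alpha-N_k+1]$. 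For $t<0$, the odd convention gives $x+t^\alpha<0$; under the even convention, $x-t$ misses the train.

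The $t$-window has width $\sim N_k^{1-\alpha}$, but it also sits at distance $\sim N_k^{1-\alpha}$ from the singularity, so it still contributes $\gtrsim1$. Consequently:
\begin{enumerate}
\item the output intervals have unit length at every scale, so $\sum_k\delta_k=K$ rather than a convergent $\sum_k \delta N_k^{1-\alpha}$;
\item $\|f\|_p\sim K^{1/p}$ while $\|g\|_q$ stays fixed;
\item the ratio is $K^{1/q}\to\infty$.
\end{enumerate}
This is exactly Subcase~4b of the paper's proof, there with a flat-top $f_N$. For $p<\infty$ the paper uses the mirror configuration: $f\equiv1$ on $[-1,1]$, narrow bumps $\psi(\cdot-c_j^\alpha-c_j)$, output on $[c_j,c_j+1]$, and ratio $N^{1/p}$.
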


\subsection{Notation}
Standard background on Lebesgue and weak $L^p$ spaces can be found in \cite{Grafakos1} and \cite{Grafakos2}. For $0 < p < \infty$, $L^p(\R)$ denotes the classical Lebesgue space
$$
L^p(\R) = \left\{f : \Vert f \Vert_{L^p(\R)} := \left(\int_{\R} |f(x)|^p\, dx\right)^{1/p} < \infty\right\},
$$
with the standard essential supremum norm when $p = \infty$. For $p \geq 1$ this is a Banach space, whereas for $0 < p < 1$ it is a quasi-Banach space. The weak Lebesgue space $L^{p,\infty}(\R)$ consists of measurable functions $f$ satisfying
$$
\Vert f \Vert_{L^{p,\infty}(\R)} := \sup_{\lambda > 0} \lambda \,|\{x \in \R : |f(x)| > \lambda\}|^{1/p} < \infty.
$$

As is standard in harmonic analysis, we write $A\lesssim B$ whenever there is a constant $C>0$ depending only on fixed parameters such that $A \leq CB$, and $A\simeq B$ when $A\lesssim B$ and $B\lesssim A$ hold simultaneously. Subscripts such as $A \lesssim_\alpha B$ indicate that the implicit constant depends on the displayed parameters.

\subsection{Acknowledgments}N.G. was supported by the NSF through grant DMS-2400238. The second author is grateful to his advisor, Victor Lie, for insightful discussions and suggestions that helped refine the formulation of the conjecture \eqref{open_problem2}. 

\section{Proof of Theorem \ref{thm:main1}}\label{sec:proof-thm1}

We prove Theorem \ref{thm:main1} by dividing the construction into three regimes according to whether $\alpha = 1$, $0 < \alpha < 1$, or $\alpha > 1$. Throughout all three cases we set the baseline value
$$
a_* = \frac{1}{2}(A+B).
$$
In each regime we first construct a single smooth, globally Lipschitz function $a : \R \to (A,B)$ and a family of compactly supported smooth test functions demonstrating the failure of \eqref{eq:BH-norm-failure} for all $1 < p, q < \infty$ with $1/p + 1/q = 1$. We then replace the finite bump train by an infinite bump train in Subsection \ref{subsec:infinite-train} to obtain a single pair of smooth inputs in $\bigcap_{s>1}L^s(\R)$ (one compactly supported) whose output fails to lie in $L^1(\R)$.

\subsection{The linear case: \texorpdfstring{$\alpha = 1$}{alpha = 1}}\label{subsec:alpha-eq-1}
Put $a_* = \frac{1}{2}(A+B)$ and choose numbers
$$
A < A_0 < a_- < a_+ < B_0 < B.
$$
Take a rapidly increasing geometric sequence $R_j = R Q^j$, $j \geq 1$, with $R, Q$ sufficiently large. Define the intervals
$$
I_j = \left[\frac{R_j}{1+a_+}, \frac{R_j}{1+a_-}\right], \qquad J_j = \left(\frac{R_j}{1+B_0}, \frac{R_j}{1+A_0}\right).
$$
Choose $Q > (1+B_0)/(1+A_0)$, so that the intervals $J_j$ are pairwise disjoint, and choose $R$ large enough that all of them lie to the right of $2$. On $J_j$, set
$$
a_j(x) = \frac{R_j}{x} - 1.
$$
Then $a_j(J_j) \subset (A_0, B_0)$. Choose smooth cutoffs $0 \leq \chi_j \leq 1$, supported in $J_j$ and equal to $1$ on $I_j$, obtained by dilating one fixed reference cutoff by $R_j$. Define
\begin{equation}\label{eq:a-def-linear}
a(x) = a_* + \sum_{j \geq 1} \chi_j(x)\left(a_j(x) - a_*\right).
\end{equation}
Because the intervals $J_j$ are disjoint and escape to infinity, the sum is locally finite, at most one term is nonzero at each point, and $a(x) \in (A,B)$ by convexity. On $J_j$, both $a_j'(x) = -R_j/x^2$ and the cutoff derivative contribution $\chi_j'(x)(a_j(x)-a_*)$ are $O(R_j^{-1})$. Thus $a$ is smooth and globally Lipschitz on $\R$.

Fix a nonzero function $f \in C_c^\infty((0,1))$ with $f \geq 0$, and put $m_f = \int_0^1 f(u)\,du > 0$. Choose a nonnegative function $\psi \in C_c^\infty(\R)$ with $\psi = 1$ on $[-B, 0]$. Increase $R, Q$ if necessary so that the supports of $\psi(\cdot - R_j)$ are pairwise disjoint. For each integer $N \geq 1$, define the finite bump train
\begin{equation}\label{eq:gN-linear}
g_N(y) = \sum_{j=1}^N \psi(y - R_j).
\end{equation}
For $x \in I_j$, $1 \leq j \leq N$, and $u \in \supp f \subset (0,1)$, the substitution $t = x - u$ gives
$$
t = x - u > 0, \qquad x + a(x)(x - u) = R_j - a(x)u \in [R_j - B, R_j],
$$
since $a(x) = a_j(x) \in (0, B)$ on $I_j$. Therefore the $j$th bump $\psi(\cdot - R_j)$ equals $1$ throughout the domain of integration. Moreover, every contribution to the integral from the remaining bumps is nonnegative because $f, \psi \geq 0$ and $f(x-t) \neq 0$ forces $t = x - u > 0$. Hence
\begin{equation}\label{eq:BH-lower-linear}
BH_a^1(f, g_N)(x) \geq \int_0^1 \frac{f(u)}{x - u}\,du \geq \frac{m_f}{x}, \qquad x \in I_j.
\end{equation}
Integrating over the disjoint intervals $I_j$ for $1 \leq j \leq N$ gives
$$
\Vert BH_a^1(f, g_N) \Vert_{L^1(\R)} \geq m_f N \log\left(\frac{1+a_+}{1+a_-}\right).
$$
On the other hand, by the disjointness of the supports of $\psi(\cdot - R_j)$,
$$
\Vert f \Vert_{L^p(\R)} \Vert g_N \Vert_{L^q(\R)} = \Vert f \Vert_{L^p(\R)} \Vert \psi \Vert_{L^q(\R)} N^{1/q}.
$$
Thus we have that
$$
\frac{\Vert BH_a^1(f, g_N) \Vert_{L^1(\R)}}{\Vert f \Vert_{L^p(\R)} \Vert g_N \Vert_{L^q(\R)}} \gtrsim N^{1 - 1/q} = N^{1/p} \to \infty \qquad \text{as } N \to \infty.
$$

In the linear case $\alpha = 1$, a single pair of smooth inputs with nonintegrable output is obtained immediately by setting
\begin{equation}\label{eq:g-inf-linear}
g(y) = \sum_{j \geq 1} \frac{1}{j}\,\psi(y - R_j).
\end{equation}
Since the supports of the bumps are pairwise disjoint and $\sum_{j \geq 1} j^{-q} < \infty$ for every $q > 1$, the function $g$ is smooth and belongs to $\bigcap_{q > 1} L^q(\R)$. However, on each interval $I_j$ we have
$$
BH_a^1(f,g)(x) \geq \frac{m_f}{j x},
$$
so integrating over $\bigcup_{j \geq 1} I_j$ bounds $\Vert BH_a^1(f,g) \Vert_{L^1(\R)}$ below by a positive constant times $\sum_{j \geq 1} j^{-1} = \infty$.

\subsection{The sublinear case: \texorpdfstring{$0 < \alpha < 1$}{0 < alpha < 1}}\label{subsec:alpha-lt-1}
As before, we put $a_* = \frac{1}{2}(A+B)$. Choose a small fixed $\eta > 0$, and then a sufficiently large fixed $L > 0$, depending only on $A$, $B$, and  $\alpha$. Set
$$
R_j = (Lj)^{1/(1-\alpha)}, \qquad w_j = R_j^{\alpha - 1} = \frac{1}{Lj}, \qquad Y_j = R_j + a_* R_j^\alpha.
$$
Define the intervals
$$
I_j = [R_j, R_j + \eta R_j^\alpha], \qquad J_j = (R_j - \eta R_j^\alpha, R_j + 2\eta R_j^\alpha).
$$
On $J_j$, we set
$$
a_j(x) = \frac{Y_j - x}{x^\alpha}.
$$
Writing $x = R_j + \delta R_j^\alpha$ for $\delta \in (-\eta, 2\eta)$ and using $R_j^{\alpha - 1} = (Lj)^{-1}$ gives the relation
$$
a_j(x) = \frac{a_* - \delta}{\left(1 + \delta/(Lj)\right)^\alpha}, \qquad -\eta < \delta < 2\eta.
$$
Choosing $\eta > 0$ sufficiently small and then $L$ sufficiently large ensures that $a_j(J_j)$ lies in a fixed compact subinterval of $(A, B)$ for all $j \geq 1$.

The intervals $J_j$ are pairwise disjoint for $L$ large. Indeed, by convexity of $s \mapsto s^{1/(1-\alpha)}$, for $j \geq 2$ we have
$$
R_j - R_{j-1} = R_j \left[1 - \left(1 - \frac{1}{j}\right)^{1/(1-\alpha)}\right] \geq \frac{R_j}{j} = L R_j^\alpha.
$$
Thus $L > 3\eta$ suffices to separate the intervals $J_j$. Choose a smooth reference cutoff $0 \leq \chi \leq 1$ with $\chi = 1$ on $[0, \eta]$ and $\supp \chi \subset (-\eta, 2\eta)$, and set
$$
\chi_j(x) = \chi\left(\frac{x - R_j}{R_j^\alpha}\right).
$$
As before, we define
$$
a(x) = a_* + \sum_{j \geq 1} \chi_j(x)\left(a_j(x) - a_*\right).
$$
Then $a$ is smooth and takes values in $(A, B)$. Moreover, on $J_j$,
$$
a_j'(x) = -x^{-\alpha} - \frac{\alpha a_j(x)}{x} = O(R_j^{-\alpha}),
$$
and the cutoff derivative term $\chi_j'(x)(a_j(x) - a_*)$ satisfies the same $O(R_j^{-\alpha})$ bound. Since $R_j \to \infty$, $a$ is globally Lipschitz on $\R$.

Fix a nonzero nonnegative function $f \in C_c^\infty((0,1))$ with $m_f = \int_0^1 f(u)\,du > 0$. Choose a nonnegative function $\psi \in C_c^\infty(\R)$ equal to $1$ on $[-B\alpha 2^{1-\alpha}, 0]$.
Define
\begin{equation}\label{eq:gN-sublinear}
g_N(y) = \sum_{j=1}^N \psi\left(\frac{y - Y_j}{w_j}\right).
\end{equation}
The supports of these bumps are pairwise disjoint after increasing $L$ if necessary: the centers $Y_j$ are separated by at least $R_j - R_{j-1} \geq L R_j^\alpha$, whereas the widths are $O(w_j) = O(R_j^{\alpha-1})$. Consequently,
$$
\Vert g_N \Vert_{L^q(\R)} = \Vert \psi \Vert_{L^q(\R)} \left(\sum_{j=1}^N w_j\right)^{1/q}.
$$

For $x \in I_j$ and $0 \leq u \leq 1$, we have $x + a(x)x^\alpha = Y_j$, and the mean value theorem gives
$$
x + a(x)(x - u)^\alpha - Y_j = a(x)\left((x - u)^\alpha - x^\alpha\right) \in [-B\alpha 2^{1-\alpha} w_j, 0],
$$
using $x - u \geq R_j/2$ (which holds for $L$ large) and $\alpha (R_j/2)^{\alpha-1} = \alpha 2^{1-\alpha} w_j$. Thus the $j$th bump of $g_N$ equals $1$ on the relevant arguments. As in the linear case, $f(x-t) \neq 0$ forces $t = x - u > 0$, so every other bump makes a nonnegative contribution and there is no cancellation. Hence, for $L$ large enough that $x \leq 2R_j$ on $I_j$,
\begin{equation}\label{eq:BH-lower-sublinear}
BH_a^\alpha(f, g_N)(x) \geq \int_0^1 \frac{f(u)}{x - u}\,du \geq \frac{m_f}{x} \geq \frac{m_f}{2R_j}, \qquad x \in I_j.
\end{equation}
Integrating over the disjoint intervals $I_j$, which have length $\eta R_j^\alpha$, yields
$$
\Vert BH_a^\alpha(f, g_N) \Vert_{L^1(\R)} \geq \frac{\eta m_f}{2} \sum_{j=1}^N R_j^{\alpha - 1} = \frac{\eta m_f}{2} \sum_{j=1}^N w_j.
$$
Since
$$
\sum_{j=1}^N w_j = L^{-1} \sum_{j=1}^N j^{-1} \simeq_L \log(N),
$$
we have that
$$
\frac{\Vert BH_a^\alpha(f, g_N) \Vert_{L^1(\R)}}{\Vert f \Vert_{L^p(\R)} \Vert g_N \Vert_{L^q(\R)}} \gtrsim \left(\sum_{j=1}^N w_j\right)^{1 - 1/q} \simeq_L \left(\log(N)\right)^{1/p} \to \infty \qquad \text{as } N \to \infty.
$$

\subsection{The superlinear case: \texorpdfstring{$\alpha > 1$}{alpha > 1}}\label{subsec:alpha-gt-1}
When $\alpha > 1$, the shrinking bumps are placed in the first input, and the output intervals lie on the negative half-line. Again we put $a_* = \frac{1}{2}(A+B)$. For a small $\eta \in (0, 1/4)$ and a sufficiently large $L > 0$, set
$$
T_j = (Lj)^{1/(\alpha - 1)}, \qquad w_j = T_j^{1-\alpha} = \frac{1}{Lj}, \qquad X_j = -a_* T_j^\alpha, \qquad U_j = X_j - T_j.
$$
Define
$$
I_j = [X_j, X_j + \eta T_j], \qquad J_j = (X_j - \eta T_j, X_j + 2\eta T_j).
$$
On $J_j$, define
$$
a_j(x) = \frac{-x}{(x - U_j)^\alpha}.
$$
Writing $x = X_j + \delta T_j$ for $\delta \in (-\eta, 2\eta)$ and noting $x - U_j = (1+\delta)T_j$, this becomes
$$
a_j(x) = \frac{a_* - \delta/(Lj)}{(1+\delta)^\alpha}, \qquad -\eta < \delta < 2\eta.
$$
Choose $\eta < 1/4$ small enough and then $L$ large enough that these values lie in a fixed compact subinterval of $(A, B)$ for all $j \geq 1$.

The intervals $J_j$ are pairwise disjoint for $L$ large. In fact, for $j \geq 2$,
$$
X_{j-1} - X_j = a_*(T_j^\alpha - T_{j-1}^\alpha) \geq a_* \frac{T_j^\alpha}{j} = a_* L T_j.
$$
Choose smooth cutoffs
$$
\chi_j(x) = \chi\left(\frac{x - X_j}{T_j}\right)
$$
which are supported in $J_j$ and equal to $1$ on $I_j$. We then define $a$ using the same expression as \eqref{eq:a-def-linear}. On $J_j$,
$$
a_j'(x) = -(x - U_j)^{-\alpha} - \frac{\alpha a_j(x)}{x - U_j} = O(T_j^{-1}),
$$
and the cutoff derivative has the same $O(T_j^{-1})$ bound. This again gives that $a:\R \to (A,B)$ is smooth and globally Lipschitz.

Choose a nonzero nonnegative function $\psi \in C_c^\infty((-1,1))$ and write $m_\psi = \int_{-1}^1 \psi(v)\,dv > 0$. Let
$$
G = 1 + B\alpha 2^{\alpha - 1},
$$
and choose $g \in C_c^\infty((-2G, 2G))$ with $0 \leq g \leq 1$ and $g = 1$ on $[-G, G]$. Define
\begin{equation}\label{eq:fN-superlinear}
f_N(u) = \sum_{j=1}^N \psi\left(\frac{u - U_j}{w_j}\right).
\end{equation}
The bump supports are pairwise disjoint for $L$ large, and
$$
\Vert f_N \Vert_{L^p(\R)} = \Vert \psi \Vert_{L^p(\R)} \left(\sum_{j=1}^N w_j\right)^{1/p}.
$$

Fix $x \in I_j$ and set $t_0 = x - U_j \in [T_j, (1+\eta)T_j]$. By definition of $a_j(x)$, we have $x + a(x)t_0^\alpha = 0$. For $|v| < 1$, put $u = U_j + w_j v$ and
$$
t = x - u = t_0 - w_j v.
$$
For $L$ sufficiently large, $T_j/2 < t < 2T_j$, and by the mean value theorem,
$$
\left|x + a(x)t^\alpha\right| = a(x)\left|(t_0 - w_j v)^\alpha - t_0^\alpha\right| \leq B\alpha(2T_j)^{\alpha - 1} w_j = B\alpha 2^{\alpha - 1} < G.
$$
Thus $g(x + a(x)t^\alpha) = 1$ throughout the contribution of the $j$th bump.

We now verify that all other bumps $k \neq j$ and all $t < 0$ contributions vanish for $x \in I_j$. For the odd power convention $t^\alpha = \sgn(t)|t|^\alpha$, any $t < 0$ gives
$$
x + a(x)t^\alpha < x \leq X_1 + \eta T_1 < -2G
$$
for $L$ large enough, so $g(x + a(x)t^\alpha) = 0$. For the even power convention $t^\alpha = |t|^\alpha$, any nonzero contribution of $g(x + a(x)t^\alpha)$ requires $|x + a(x)|t|^\alpha| < 2G$. Since $x = -a(x)t_0^\alpha$, this implies
$$
\left||t|^\alpha - t_0^\alpha\right| \leq \frac{2G}{A}.
$$
For $L$ sufficiently large, this forces $|t| \leq 2t_0 < 3T_j$, and the same upper bound applies to $t > 0$ under either convention. Now compare the support of the $k$th bump with $|x - u| = |t| \leq 3T_j$: if $k < j$, then
$$
U_k - X_j = a_*(T_j^\alpha - T_k^\alpha) - T_k \geq (a_* L - 1)T_j,
$$
whereas if $k > j$, then
$$
X_j - U_k = a_*(T_k^\alpha - T_j^\alpha) + T_k \geq a_* L T_k \geq a_* L T_j.
$$
Taking $a_* L$ sufficiently large and noting $w_k \leq 1/L$ excludes every bump $k \neq j$ from the region $|x - u| \leq 3T_j$. As for the $j$th bump itself, every $u \in (U_j - w_j, U_j + w_j)$ satisfies $u < x$, so it contributes only for $t = x - u > 0$. Therefore, on $I_j$, the full integral defining $BH_a^\alpha(f_N, g)(x)$ consists solely of the positive contribution of the $j$th bump:
\begin{equation}\label{eq:BH-lower-superlinear}
BH_a^\alpha(f_N, g)(x) = w_j \int_{-1}^1 \frac{\psi(v)}{t_0 - w_j v}\,dv \geq \frac{m_\psi w_j}{2T_j}, \qquad x \in I_j.
\end{equation}
Integrating over the disjoint intervals $I_j$ of length $\eta T_j$ gives
$$
\Vert BH_a^\alpha(f_N, g) \Vert_{L^1(\R)} \geq \frac{\eta m_\psi}{2} \sum_{j=1}^N w_j.
$$
So
$$
\frac{\Vert BH_a^\alpha(f_N, g) \Vert_{L^1(\R)}}{\Vert f_N \Vert_{L^p(\R)} \Vert g \Vert_{L^q(\R)}} \gtrsim \left(\sum_{j=1}^N w_j\right)^{1 - 1/p} \simeq_L \left(\log(N)\right)^{1/q} \to \infty \qquad \text{as } N \to \infty.
$$
This establishes \eqref{eq:BH-norm-failure} of Theorem \ref{thm:main1} in every regime.

\subsection{One pair of smooth \texorpdfstring{$L^s$}{Ls} inputs with nonintegrable output}\label{subsec:infinite-train}
For either nonlinear regime ($0 < \alpha < 1$ or $\alpha > 1$), replace the finite bump train by the infinite bump train weighted by the heights $\left(\log(j+1)\right)^{-1}$, $j \geq 1$. Specifically, for $0 < \alpha < 1$, keep $f \in C_c^\infty((0,1))$ fixed and set
\begin{equation}\label{eq:g-inf-sublinear}
g(y) = \sum_{j \geq 1} \frac{1}{\log(j+1)} \,\psi\left(\frac{y - Y_j}{w_j}\right),
\end{equation}
whereas for $\alpha > 1$, keep $g \in C_c^\infty((-2G, 2G))$ fixed and set
\begin{equation}\label{eq:f-inf-superlinear}
f(u) = \sum_{j \geq 1} \frac{1}{\log(j+1)} \,\psi\left(\frac{u - U_j}{w_j}\right).
\end{equation}
In both cases, the supports of the individual bumps are pairwise disjoint, so the resulting infinite--train function is $C^\infty(\R)$. Since $w_j = 1/(Lj)$, for every $s > 1$ we have
$$
\sum_{j \geq 1} \frac{w_j}{\left(\log(j+1)\right)^s} = \frac{1}{L} \sum_{j \geq 1} \frac{1}{j \left(\log(j+1)\right)^s} < \infty,
$$
whereas
$$
\sum_{j \geq 1} \frac{w_j}{\log(j+1)} = \frac{1}{L} \sum_{j \geq 1} \frac{1}{j \log(j+1)} = \infty.
$$
Thus the infinite bump train belongs to $\bigcap_{s > 1} L^s(\R)$, while the same interval lower bounds \eqref{eq:BH-lower-sublinear} and \eqref{eq:BH-lower-superlinear} on $I_j$, now multiplied by $(\log(j+1))^{-1}$, show that
$$
\Vert BH_a^\alpha(f, g) \Vert_{L^1(\R)} \geq \sum_{j \geq 1} \int_{I_j} \left|BH_a^\alpha(f,g)(x)\right|\,dx \gtrsim \sum_{j \geq 1} w_j \frac{1}{\log(j+1)} = \infty.
$$
The linear case $\alpha = 1$ was already handled in \eqref{eq:g-inf-linear} with the weight $j^{-1}$ and constant bump widths.

Note that although these infinite-train inputs are smooth and belong to $L^s(\R)$ for all $s > 1$, they are neither compactly supported nor lying in the Schwartz class. Nevertheless, the principal value integral defining $BH_a^\alpha(f,g)(x)$ is well-defined at every $x \in \R$: in the $\alpha \leq 1$ constructions, the compact support of $f$ restricts $t$ to a bounded interval for each $x$, while in the $\alpha > 1$ construction, the compact support of $g$ together with $a(x) \geq A > 0$ restricts $t$ to a bounded interval for each $x$. Near $t = 0$, the smoothness of $f$ and $g$ and the local $\min(1, \alpha)$-H\"{o}lder continuity of $t \mapsto t^\alpha$ ensure absolute convergence of the symmetric principal value. This completes the proof of Theorem \ref{thm:main1}. \qed

\section{Proof of Theorem \ref{thm:main2}}\label{sec:proof-thm2}
We break the proof up into three steps: in Step 1 we establish the negative results at the $r=1$ endpoint by adapting the positive bounded coefficient construction from Section \ref{sec:proof-thm1} to $BM_a^\alpha$ (including the endpoint exponent pairs where $p = \infty$ or $q = \infty$); in Step 2 we prove the strong--type $L^p \times L^q \to L^r$ bounds for $r > 1$; and in Step 3 we establish the weak--type $L^p \times L^q \to L^{1,\infty}$ bounds for $r = 1$.

\medskip
\noindent\texttt{Step 1: Endpoint failure of $BM_a^\alpha : L^p \times L^q \to L^1$ for $a : \R \to (A,B)$: }\\
Fix $\alpha > 0$, $0 < A < B < \infty$, and H\"{o}lder conjugate exponents $1 \leq p, q \leq \infty$ with $1/p + 1/q = 1$. We first treat $1 < p, q < \infty$ (together with the endpoint cases directly covered by the disjoint bump trains) using the exact same smooth, globally Lipschitz coefficient $a : \R \to (A, B)$ and test functions constructed in Section \ref{sec:proof-thm1}, and we then address the remaining infinite-exponent cases $p = \infty$ or $q = \infty$.

\begin{itemize}
\item \textbf{Case 1: $\alpha = 1$.} Let $a : \R \to (A,B)$, $f \in C_c^\infty((0,1))$, and $g_N$ be as in Subsection \ref{subsec:alpha-eq-1}. For each $x \in I_j = \left[\frac{R_j}{1+a_+}, \frac{R_j}{1+a_-}\right]$ ($1 \leq j \leq N$), choose the averaging scale $r = x > 0$ in the supremum defining $BM_a^1(f, g_N)(x)$. Since $x > 2$ and $u = x - t \in (0,1)$ implies that
$$
t = x - u \in (x-1, x) \subset [-x, x],
$$
changing variables to $u = x - t$ gives for $x \in I_j$ that
$$
BM_a^1(f, g_N)(x) \geq \frac{1}{2x} \int_{-x}^x f(x-t) g_N\left(x + a(x)t\right)\,dt \geq \frac{1}{2x} \int_0^1 f(u) g_N\left(x + a(x)(x-u)\right)\,du = \frac{m_f}{2x},
$$
because
$$
x + a(x)(x-u) = R_j - a(x)u \in [R_j - B, R_j]
$$
where the $j$th bump of $g_N$ equals $1$. Integrating over the disjoint intervals $I_j$ for $1 \leq j \leq N$ yields
$$
\Vert BM_a^1(f, g_N) \Vert_{L^1(\R)} \geq \frac{m_f}{2} N \log\left(\frac{1+a_+}{1+a_-}\right),
$$
while
$$
\Vert f \Vert_{L^p(\R)} \Vert g_N \Vert_{L^q(\R)} = \Vert f \Vert_{L^p(\R)} \Vert \psi \Vert_{L^q(\R)} N^{1/q}
$$
for all $1 \leq q \leq \infty$. Thus
$$
\frac{\Vert BM_a^1(f,g_n)\Vert_{L^1(\R)}}{\Vert f \Vert_{L^p(\R)} \Vert g_N \Vert_{L^q(\R)}} \gtrsim N^{1 - 1/q} = N^{1/p} \to \infty
$$
for all $1 \leq p < \infty$, including $(p,q) = (1,\infty)$. Similarly, for the single pair $(f, g)$ with $g$ given by the infinite bump train \eqref{eq:g-inf-linear}, we have
$$
BM_a^1(f,g)(x) \geq \frac{m_f}{2jx}
$$
on $I_j$, so $\Vert BM_a^1(f,g) \Vert_{L^1(\R)} = \infty$.

\item \textbf{Case 2: $0 < \alpha < 1$.} Let $a : \R \to (A,B)$, $f \in C_c^\infty((0,1))$, and $g_N$ be as in Subsection \ref{subsec:alpha-lt-1}. For $x \in I_j = [R_j, R_j + \eta R_j^\alpha]$ ($1 \leq j \leq N$), we again choose $r = x > 0$ in the supremum defining $BM_a^\alpha(f, g_N)(x)$. For $u = x - t \in (0,1)$, we saw in Subsection \ref{subsec:alpha-lt-1} that
$$
x + a(x)(x-u)^\alpha - Y_j \in [-B\alpha 2^{1-\alpha} w_j, 0],
$$
so the $j$th bump of $g_N$ equals $1$. Since $x \leq 2R_j$ on $I_j$, we obtain
$$
BM_a^\alpha(f, g_N)(x) \geq \frac{1}{2x} \int_0^1 f(u) g_N\left(x + a(x)(x-u)^\alpha\right)\,du = \frac{m_f}{2x} \geq \frac{m_f}{4R_j}, \qquad x \in I_j.
$$
Integrating over the disjoint intervals $I_j$ of length $\eta R_j^\alpha$ gives
$$
\Vert BM_a^\alpha(f, g_N) \Vert_{L^1(\R)} \geq \frac{\eta m_f}{4} \sum_{j=1}^N R_j^{\alpha - 1} = \frac{\eta m_f}{4} \sum_{j=1}^N w_j \simeq_L \log(N).
$$
Since the bumps of $g_N$ have disjoint supports,
$$
\Vert g_N \Vert_{L^q(\R)} = \Vert \psi \Vert_{L^q(\R)} \left(\sum_{j=1}^N w_j\right)^{1/q}
$$
for all $1 \leq q \leq \infty$, so 
$$
\frac{\Vert BM_a^\alpha(f,g_n)\Vert_{L^1(\R)}}{\Vert f \Vert_{L^p(\R)} \Vert g_N \Vert_{L^q(\R)}} \gtrsim \left(\log(N)\right)^{1 - 1/q} = \left(\log(N)\right)^{1/p} \to \infty
$$
for all $1 \leq p < \infty$, again including $(p,q) = (1,\infty)$. Likewise, replacing $g_N$ by the infinite bump train $g \in \bigcap_{s>1}L^s(\R)$ from \eqref{eq:g-inf-sublinear} gives
$$
BM_a^\alpha(f,g)(x) \geq \frac{m_f}{4R_j\log(j+1)}
$$
on $I_j$, and hence
$$
\Vert BM_a^\alpha(f,g) \Vert_{L^1(\R)} \gtrsim \sum_{j \geq 1} \frac{w_j}{\log(j+1)} = \infty.
$$

\item \textbf{Case 3: $\alpha > 1$.} Let $a : \R \to (A,B)$, $f_N$, and $g \in C_c^\infty((-2G, 2G))$ be as in Subsection \ref{subsec:alpha-gt-1}. For $x \in I_j = [X_j, X_j + \eta T_j]$ ($1 \leq j \leq N$), recall that
$$
t_0 = x - U_j \in [T_j, (1+\eta)T_j],
$$
and for $u = U_j + w_j v$ with $|v| < 1$, we have
$$
t = x - u = t_0 - w_j v \in \left(\frac{T_j}{2}, 2T_j\right)
$$
and $g(x + a(x)t^\alpha) = 1$. Choosing the averaging scale $r = 2T_j$ in the supremum defining $BM_a^\alpha(f_N, g)(x)$ and using the nonnegativity of the integrand to restrict to the $j$th bump of $f_N$, we obtain
$$
BM_a^\alpha(f_N, g)(x) \geq \frac{1}{4T_j} \int_{-2T_j}^{2T_j} f_N(x-t) g\left(x + a(x)t^\alpha\right)\,dt \geq \frac{w_j}{4T_j} \int_{-1}^1 \psi(v)\,dv = \frac{m_\psi w_j}{4T_j}, \qquad x \in I_j.
$$
Integrating over the disjoint intervals $I_j$ of length $\eta T_j$ gives
$$
\Vert BM_a^\alpha(f_N, g) \Vert_{L^1(\R)} \geq \frac{\eta m_\psi}{4} \sum_{j=1}^N w_j \simeq_L \log(N+1).
$$
Since
$$
\Vert f_N \Vert_{L^p(\R)} = \Vert \psi \Vert_{L^p(\R)} \left(\sum_{j=1}^N w_j\right)^{1/p}
$$
for all $1 \leq p \leq \infty$, we once again have
$$
\frac{\Vert BM_a^\alpha(f,g_n)\Vert_{L^1(\R)}}{\Vert f \Vert_{L^p(\R)} \Vert g_N \Vert_{L^q(\R)}} \gtrsim \left(\log(N)\right)^{1 - 1/p} = \left(\log(N)\right)^{1/q} \to \infty
$$
for all $1 \leq q < \infty$, including $(p,q) = (\infty, 1)$. Finally, replacing $f_N$ by the infinite bump train $f \in \bigcap_{s>1}L^s(\R)$ from \eqref{eq:f-inf-superlinear} gives
$$
BM_a^\alpha(f,g)(x) \geq \frac{m_\psi w_j}{4T_j\log(j+1)}
$$ 
on $I_j$, and therefore
$$
\Vert BM_a^\alpha(f,g) \Vert_{L^1(\R)} \gtrsim \sum_{j \geq 1} \frac{w_j}{\log(j+1)} = \infty.
$$

\item \textbf{The remaining endpoint cases $(p,q) = (1,\infty)$ and $(p,q) = (\infty,1)$.} As noted above, the disjointness of the bump supports in $g_N$ (for $\alpha \leq 1$) and $f_N$ (for $\alpha > 1$) already implies $\Vert g_N \Vert_{L^\infty(\R)} = \Vert \psi \Vert_{L^\infty(\R)}$ and $\Vert f_N \Vert_{L^\infty(\R)} = \Vert \psi \Vert_{L^\infty(\R)}$, which immediately handles $(p,q) = (1,\infty)$ when $\alpha \leq 1$ and $(p,q) = (\infty,1)$ when $\alpha > 1$. In fact, for any $\alpha > 0$ and any measurable $a : \R \to (A,B)$, both endpoint pairs $(p,q) = (1,\infty)$ and $(p,q) = (\infty,1)$ also fail by a direct flat--top cutoff argument:
\begin{enumerate}[(i)]
\item For $(p,q) = (1,\infty)$ and any $\alpha > 0$, fix a nonzero nonnegative $f \in C_c^\infty((0,1))$ with $m_f = \Vert f \Vert_{L^1(\R)} > 0$, and for $M > 2$ choose $g_M \in C_c^\infty(\R)$ satisfying $0 \leq g_M \leq 1$ and $g_M = 1$ on $[0, M + B M^\alpha]$. For every $x \in (2, M)$ and $u = x - t \in (0,1)$, we have
$$
t = x - u \in (0, M)
$$
and
$$
x + a(x)t^\alpha \in [0, M + B M^\alpha],
$$
so $g_M(x + a(x)t^\alpha) = 1$. Choosing $r = x$ gives 
$$
BM_a^\alpha(f, g_M)(x) \geq \frac{m_f}{2x}
$$ 
on $(2, M)$, and hence
$$
\frac{\Vert BM_a^\alpha(f, g_M) \Vert_{L^1(\R)}}{\Vert f \Vert_{L^1(\R)} \Vert g_M \Vert_{L^\infty(\R)}} \geq \frac{1}{2}\log\left(\frac{M}{2}\right) \to \infty \qquad \text{as } M \to \infty.
$$
\item For $(p,q) = (\infty,1)$ and any $\alpha > 0$, fix a nonzero nonnegative $g \in C_c^\infty((0,1))$ and, for $M > 2$, choose $f_M \in C_c^\infty(\R)$ satisfying $0 \leq f_M \leq 1$ and $f_M = 1$ on $\left[-(2M/A)^{1/\alpha} - M, 0\right]$. For each $x \in (-M, -2)$, set 
$$
r_x = \left(\frac{1-x}{A}\right)^{1/\alpha} \simeq_{A,\alpha} |x|^{1/\alpha}.
$$
When $t > 0$ is such that $y := x + a(x)t^\alpha \in (0,1)$, we have
$$
t = \left(\frac{y-x}{a(x)}\right)^{1/\alpha} \in (0, r_x),
$$
$$
x - t \in \left[-(2M/A)^{1/\alpha} - M, 0\right] \implies f_M(x-t) = 1,
$$
and
$$
\frac{dt}{dy} = \frac{(y-x)^{\frac{1}{\alpha}-1}}{\alpha a(x)^{1/\alpha}} \simeq_{A,B,\alpha} |x|^{\frac{1}{\alpha}-1}.
$$ 
Consequently,
$$
BM_a^\alpha(f_M, g)(x) \geq \frac{1}{2r_x} \int_0^{r_x} f_M(x-t) g\left(x + a(x)t^\alpha\right)\,dt \gtrsim_{A,B,\alpha} \frac{\Vert g \Vert_{L^1(\R)}}{|x|}, \qquad -M < x < -2.
$$
Integrating over $x \in (-M, -2)$ while noting that $\Vert f_M \Vert_{L^\infty(\R)} = 1$ yields
$$
\frac{\Vert BM_a^\alpha(f_M, g) \Vert_{L^1(\R)}}{\Vert f_M \Vert_{L^\infty(\R)} \Vert g \Vert_{L^1(\R)}} \gtrsim_{A,B,\alpha} \log\left(\frac{M}{2}\right) \to \infty \qquad \text{as } M \to \infty.
$$
\end{enumerate}
\end{itemize}
This establishes all negative assertions of Theorem \ref{thm:main2}.

\medskip
\noindent\texttt{Step 2: $BM_a^\alpha$ maps $L^p \times L^q \to L^r$ boundedly for $r > 1$: }

We require the following lemma.

\begin{lemma}\label{lem1}
    Fix $\alpha >0$, $a \in \R$, and $R > 0$. For all $h \in L^1_{\mathrm{loc}}(\R)$ we have
    $$
    \frac{1}{2R} \int_{-R}^R \left|h(x + at^\alpha)\right|\, dt \leq C_\alpha Mh(x),
    $$
    where $M$ is the standard centered Hardy--Littlewood maximal function. The constant $C_\alpha$ depends only on $\alpha$ and is independent of $a$ and $R$.
\end{lemma}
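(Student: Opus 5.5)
The plan is to reduce to the case $a>0$ and $t>0$, substitute $s=at^\alpha$, and compare the resulting weighted average with a superposition of centered averages of $|h|$ around $x$.

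\textbf{Trivial cases and reductions.} If $a=0$, the left side equals $|h(x)|$. At Lebesgue points this is at most $Mh(x)$, and in the pointwise-almost-everywhere sense the lemma is used, that suffices. Assume $a\neq 0$. By the symmetry $t\mapsto -t$ and the convention on $t^\alpha$ for $t<0$ (either $\operatorname{sgn}(t)|t|^\alpha$ or $|t|^\alpha$), the integral over $(-R,0)$ is an average of the same type as the integral over $(0,R)$, possibly with $a$ replaced by $-a$. So it suffices to bound
$$\frac{1}{R}\int_0^R |h(x+at^\alpha)|\,dt$$
for arbitrary real $a\neq0$. Replacing $h$ by its reflection about $x$ if needed, we may also take $a>0$.

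\textbf{Change of variables.} Set $s=at^\alpha$, so $t=(s/a)^{1/\alpha}$ and $dt=\frac{1}{\alpha}a^{-1/\alpha}s^{1/\alpha-1}\,ds$. Put $S=aR^\alpha$. The average becomes
$$\frac{1}{\alpha S^{1/\alpha}}\int_0^S |h(x+s)|\,s^{1/\alpha-1}\,ds=\int_0^S |h(x+s)|\,\phi(s)\,ds,$$
where $\phi(s)=\frac{1}{\alpha}S^{-1/\alpha}s^{1/\alpha-1}\mathbf 1_{(0,S)}(s)$. This $\phi$ is a probability density on $(0,S)$, and $a$ and $R$ enter only through the single scale $S$. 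The target bound is therefore $\int |h(x+s)|\phi(s)\,ds\le C_\alpha Mh(x)$ for every probability density of this exact shape.

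\textbf{Main step: a radially decreasing majorant.} I would use the classical fact that if $\Phi$ is an integrable, nonnegative, even function that is nonincreasing in $|s|$, then $\int |h(x+s)|\Phi(s)\,ds\le\|\Phi\|_{L^1}\,Mh(x)$.

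For $\alpha\ge1$, the exponent $1/\alpha-1\le0$, so $\phi$ is nonincreasing on $(0,S)$. Take $\Phi(s)=\phi(|s|)$. It is even, decreasing in $|s|$, and has $\|\Phi\|_{L^1}=2$, which gives $C_\alpha=2$.

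For $0<\alpha<1$, $\phi$ is increasing on $(0,S)$ but bounded. Its maximum is $\phi(S)=\frac{1}{\alpha}S^{-1}$, so $\phi\le\frac{1}{\alpha S}\mathbf 1_{(0,S)}$. This yields
$$\int |h(x+s)|\,\phi(s)\,ds\le \frac{1}{\alpha S}\int_0^S|h(x+s)|\,ds\le \frac{2}{\alpha}\,Mh(x).$$

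Combining the two half-line contributions, $C_\alpha=2\max(1,1/\alpha)$ works. This constant depends only on $\alpha$ and is independent of $a$ and $R$.

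\textbf{Expected obstacle.} The only delicate point is the singular weight $s^{1/\alpha-1}$ near $s=0$ when $\alpha>1$. I would handle it by the layer-cake decomposition $\Phi=\int_0^\infty \mathbf 1_{\{\Phi>\lambda\}}\,d\lambda$: each level set is a symmetric interval centered at $0$, so each layer is controlled by $Mh(x)$ times its length, and integrating in $\lambda$ gives $\|\Phi\|_{L^1}\,Mh(x)$.
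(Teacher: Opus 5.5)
Your proof is correct and follows the paper's argument. You use the same reduction to $a>0$ and the half-line $t\in(0,R)$, the same substitution producing the probability density $s^{1/\alpha-1}/(\alpha S^{1/\alpha})$ on $(0,S)$, and the same sup-bound $\phi\le \frac{1}{\alpha S}$ when $0<\alpha<1$, giving $C_\alpha=2\max(1,\alpha^{-1})$. For $\alpha>1$ the paper writes the decreasing weight as a $\mu$-superposition of one-sided averages $\frac{1}{s}\mathbf 1_{[0,s]}$, which is your layer-cake argument without the symmetrization. Your remark that the case $a=0$ holds only at Lebesgue points is a fair refinement of the paper's ``immediate''.
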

\begin{proof}
    If $a = 0$ the result is immediate. The proof for $a < 0$ is analogous to the case $a > 0$, so we focus on the case $a > 0$. By symmetry, it suffices to handle the integral over $t \in [0, R]$ and prove that
    $$
    \frac{1}{R} \int_{0}^R \left|h(x + at^\alpha)\right|\, dt \leq C_\alpha Mh(x).
    $$
    Making the change of variables $u = at^\alpha$ (which we relabel as $t$) gives
    $$
    \frac{1}{R} \int_{0}^R \left|h(x + at^\alpha)\right|\, dt = \int_0^{aR^\alpha} |h(x+t)| \left(\frac{t^{\frac{1}{\alpha}-1}}{\alpha a^{\frac{1}{\alpha}} R} \right)\, dt.
    $$
    Denote
    $$
    w(t) = \frac{t^{\frac{1}{\alpha}-1}}{\alpha a^{\frac{1}{\alpha}} R}.
    $$
    Then $w$ is a probability density on the interval $[0,aR^\alpha]$. We consider two cases according to the size of $\alpha$:
    \begin{itemize}
        \item \textbf{Case 1:} $0 < \alpha \leq 1$.\\
        In this regime $w$ is nondecreasing on $(0, aR^\alpha]$, and hence
        $$
        \int_0^{aR^\alpha} |h(x+t)| w(t)\, dt \leq w(aR^\alpha) \int_0^{aR^\alpha} |h(x+t)|\, dt = \frac{1}{\alpha aR^\alpha} \int_0^{aR^\alpha} |h(x+t)|\, dt \leq \frac{2}{\alpha} Mh(x).
        $$
        \item \textbf{Case 2:} $\alpha >1$.\\
        Here $w$ is strictly decreasing on $(0, aR^\alpha]$. Define a positive Borel measure $\mu$ on $(0,aR^\alpha]$ by
        $$
        d\mu(s) = -sw'(s)\, ds + \frac{1}{\alpha} \delta_{aR^\alpha}(ds) = -s\, dw(s) + \frac{1}{\alpha} \delta_{aR^\alpha}(ds).
        $$
        By the fundamental theorem of calculus, for every $t \in (0, aR^\alpha]$,
        $$
        w(t) = \int_t^{aR^\alpha} \frac{1}{s}\, d\mu(s),
        $$
        and integrating $t$ over $(0, aR^\alpha]$ shows that $\mu((0, aR^\alpha]) = \int_0^{aR^\alpha} w(t)\,dt = 1$, so $\mu$ is a probability measure. Applying Tonelli's theorem, we obtain
        \begin{equation*}
            \begin{split}
                \int_0^{aR^\alpha} |h(x+t)|w(t)\, dt &= \int_0^{aR^\alpha} \int_t^{aR^\alpha} \frac{1}{s} |h(x+t)|\, d\mu(s)\, dt\\
                &= \int_0^{aR^\alpha} \frac{1}{s}\int_0^s |h(x+t)|\, dt\, d\mu(s)\\
                &\leq 2Mh(x) \int_0^{aR^\alpha} d\mu(s)\\
                &= 2Mh(x).
            \end{split}
        \end{equation*}
    \end{itemize}
    Thus, taking $C_\alpha = 2 \max(1,\alpha^{-1})$ yields the desired inequality.
\end{proof}

Now fix $r >0$. For any H\"{o}lder conjugate exponents $s, s' > 1$, H\"{o}lder's inequality gives
$$
\frac{1}{2r} \int_{-r}^r \left|f(x-t) g\left(x+a(x)t^\alpha\right)\right|\, dt \leq \left(\frac{1}{2r} \int_{-r}^r |f(x-t)|^s\, dt\right)^{1/s} \left(\frac{1}{2r} \int_{-r}^r \left|g\left(x+a(x)t^\alpha\right)\right|^{s'}\, dt\right)^{1/s'}.
$$
Applying Lemma \ref{lem1} at each point $x$ with $a = a(x)$ and taking the supremum over $r > 0$ yields the pointwise estimate
\begin{equation}\label{eq:BM_pointwsie_est}
    BM_a^\alpha(f,g)(x) \lesssim_{\alpha,s} \left(M(|f|^s)(x)\right)^{1/s} \left(M(|g|^{s'})(x)\right)^{1/s'}.
\end{equation}
Now fix $1 < p, q \leq \infty$ and $r > 1$ satisfying
$$
\frac{1}{p} + \frac{1}{q} = \frac{1}{r}.
$$
When $p, q < \infty$, let $p'$ and $q'$ denote the H\"{o}lder conjugates of $p$ and $q$. Since $r > 1$, we have $q' < p$ and $p' < q$. Thus we may choose H\"{o}lder conjugate exponents $s, s' > 1$ such that
$$
q' < s < p \qquad \text{and} \qquad p' < s' < q.
$$
Applying \eqref{eq:BM_pointwsie_est}, H\"{o}lder's inequality with exponents $p/r$ and $q/r$, and the strong--type $L^{p/s}$ and $L^{q/s'}$ boundedness of the Hardy--Littlewood maximal function (since $p/s > 1$ and $q/s' > 1$), we obtain
\begin{equation*}
    \begin{split}
        \Vert BM_a^\alpha(f,g) \Vert_{L^r(\R)} &\lesssim_{\alpha,s} \left( \int_{\R} \left|\left(M(|f|^s)(x)\right)^{1/s} \left(M(|g|^{s'})(x)\right)^{1/s'}\right|^r\, dx\right)^{1/r}\\
        &\leq \left( \int_{\R} \left|M(|f|^s)(x)\right|^{p/s}\, dx\right)^{1/p} \left( \int_{\R} \left|M(|g|^{s'})(x)\right|^{q/s'}\, dx\right)^{1/q}\\
        &\lesssim_{\alpha,s,p,q} \Vert f \Vert_{L^p(\R)} \Vert g \Vert_{L^q(\R)}.
    \end{split}
\end{equation*}
When $p = \infty$ (so $q = r > 1$) or $q = \infty$ (so $p = r > 1$), one simply bounds the $L^\infty$ factor out of the average and applies Lemma \ref{lem1} together with the $L^r$ boundedness of $M$ directly.

\medskip
\noindent\texttt{Step 3: $BM_a^\alpha$ maps $L^p \times L^q \to L^{1,\infty}$ boundedly for $1/p + 1/q = 1$: }

First, if $p = 1$ and $q = \infty$, then
$$
BM_a^\alpha(f,g)(x) \leq \Vert g \Vert_{L^\infty(\R)} \sup_{r > 0} \frac{1}{2r} \int_{-r}^r |f(x-t)|\, dt = Mf(x) \Vert g \Vert_{L^\infty(\R)}. 
$$
Applying the weak--type $(1,1)$ estimate for the Hardy--Littlewood maximal function gives
$$
\Vert BM_a^\alpha(f,g) \Vert_{L^{1,\infty}(\R)} \lesssim \Vert f \Vert_{L^1(\R)} \Vert g \Vert_{L^\infty(\R)}.
$$
Similarly, if $p = \infty$ and $q = 1$, Lemma \ref{lem1} yields
$$
BM_a^\alpha(f,g)(x) \leq \Vert f \Vert_{L^\infty(\R)} \sup_{r > 0} \frac{1}{2r} \int_{-r}^r \left|g\left(x + a(x)t^\alpha\right)\right|\, dt \lesssim_\alpha Mg(x) \Vert f \Vert_{L^\infty(\R)},
$$
and hence
$$
\Vert BM_a^\alpha(f,g) \Vert_{L^{1,\infty}(\R)} \lesssim_\alpha \Vert f \Vert_{L^\infty(\R)} \Vert g \Vert_{L^1(\R)}.
$$
To handle the remaining cases, fix a H\"{o}lder conjugate pair $1 < p, q < \infty$ and assuming without loss of generality that $\Vert f \Vert_{L^p(\R)}, \Vert g \Vert_{L^q(\R)} > 0$ set, for $\lambda > 0$,
$$
E_\lambda = \left\{x \in \R : BM_a^\alpha(f,g)(x) > \lambda\right\}.
$$
Applying \eqref{eq:BM_pointwsie_est} with $s = p$ and $s' = q$ and using the weak--type $(1,1)$ bound for $M$, we have for every $\mu > 0$
\begin{equation}\label{eq:weak_type_est}
        |E_\lambda| \leq \left|\{x : M(|f|^p)(x) > \mu\}\right| + \left|\left\{x : M(|g|^q)(x) > \frac{(\lambda/C_{\alpha,p})^q}{\mu^{q/p}}\right\}\right| \lesssim_{\alpha,p,q} \frac{\Vert f \Vert_{L^p(\R)}^p}{\mu} + \frac{\mu^{q/p} \Vert g \Vert_{L^q(\R)}^q}{\lambda^q}.
\end{equation}
Minimizing the right-hand side of \eqref{eq:weak_type_est} with respect to $\mu > 0$ by choosing
$$
\mu = \left( \frac{p}{q} \lambda^q \frac{\Vert f \Vert_{L^p(\R)}^p}{\Vert g \Vert_{L^q(\R)}^q}\right)^{\frac{p}{p+q}}
$$
and using $1/p + 1/q = 1$ gives
$$
\lambda |E_\lambda| \lesssim_{\alpha,p,q} \Vert f \Vert_{L^p(\R)} \Vert g \Vert_{L^q(\R)}.
$$
Taking the supremum over $\lambda > 0$ completes the proof of Theorem \ref{thm:main2}. \qed

\section{Proof of Proposition \ref{prop:unit-interval-shift}}
Choose a nonnegative function $g\in C_c^\infty(\mathbb R)$ which is identically $1$ on $[0,1+2^\alpha]$. For each integer $N \geq 2$, set
$\varepsilon_N:=e^{-N^2}$,
and choose a nonnegative function $f_N\in C_c^\infty((-1,0))$
satisfying $0 \leq f_N \leq 1$ and
$$
f_N\equiv1
\qquad\text{on }
\left[-\frac12,-\varepsilon_N\right].
$$
Since the functions $f_N$ are uniformly bounded and supported in a
fixed compact interval, we have
\begin{equation}\label{eq:uniform-fN-norm}
\sup_{N\geq2}\|f_N\|_{L^p(\mathbb R)}
\lesssim_p 1.
\end{equation}
Moreover, $\|g\|_{L^q(\mathbb R)} < \infty$ for every $1 < q \leq \infty$. We claim that $BSH_{[0,1]}^\alpha(f_N,g)$ becomes arbitrarily large on an interval of length comparable to $N^{-1}$. To see this, for $0 < x < N^{-1}$ put
$$
c_x = x - e^{-1/x^2}.
$$
Since $0<x<1/2$, we have
$$
0 < e^{-1/x^2} < x,
$$
and hence
$$
0 < c_x < x < 1.
$$
Thus $c_x \in [0,1]$, and by the definition of
$BSH_{[0,1]}^\alpha$,
\begin{equation}\label{eq:choose-cx}
BSH_{[0,1]}^\alpha(f_N,g)(x)
\geq
\left|
 \int_{\mathbb R}
f_N(x-t)g(x+t^\alpha)\,
\frac{dt}{t-c_x}
\right|.
\end{equation}
Notice that for $f_N(x-t)$ to be nonzero we must have $t > x > c_x$, so the integrand in \eqref{eq:choose-cx} is nonnegative and supported strictly to the right of the singularity $t = c_x$. We may therefore restrict the integral in \eqref{eq:choose-cx} to
$x + \varepsilon_N \leq t \leq x + \frac12.$
Write
$$
t = x + s,
\qquad
\varepsilon_N \leq s \leq \frac12.
$$
Then
$$
x-t=-s
\in
\left[-\frac12,-\varepsilon_N\right],
$$
so that by construction $f_N(x-t) = 1$. Also,
$$
0<t=x+s<1,
$$
and therefore
$$
0 < x + t^\alpha < 1 + 2^\alpha.
$$
By our choice of $g$, we have $g(x+t^\alpha)=1$. Finally, since
$$
t - c_x = x + s - \left(x-e^{-1/x^2}\right) = s + e^{-1/x^2}
$$
and $0 < x < 1/N$, we have
$$
e^{-1/x^2} \leq e^{-N^2} = \varepsilon_N.
$$
Hence
$$
BSH_{[0,1]}^\alpha(f_N,g)(x) \geq
\int_{\varepsilon_N}^{1/2}
\frac{ds}{s+e^{-1/x^2}} \geq
\int_{\varepsilon_N}^{1/2}
\frac{ds}{s+\varepsilon_N}.
$$
For $s \geq \varepsilon_N$ we trivially have $s + \varepsilon_N \leq 2s$. Consequently,
$$
BSH_{[0,1]}^\alpha(f_N,g)(x)
\geq
\frac12
\int_{\varepsilon_N}^{1/2}\frac{ds}{s}
= \frac12 \log\left(\frac{1}{2\varepsilon_N}\right) = \frac12 \left(N^2 - \log2 \right).
$$
Thus for all sufficiently large $N$ we have
\begin{equation}\label{eq:unit-interval-pointwise-lower}
BSH_{[0,1]}^\alpha(f_N,g)(x)
\geq
\frac{N^2}{4},
\qquad
0<x<\frac1N.
\end{equation}
Set $\lambda_N := \frac{N^2}{8}$. It follows from \eqref{eq:unit-interval-pointwise-lower} that
$$
\left(0,\frac1N\right)
\subseteq
\left\{
x\in\mathbb R:
BSH_{[0,1]}^\alpha(f_N,g)(x)>\lambda_N
\right\},
$$
and therefore
$$
\left|
\left\{
x\in\mathbb R:
BSH_{[0,1]}^\alpha(f_N,g)(x)>\lambda_N
\right\}
\right|
\geq
\frac1N.
$$
By the definition of the weak $L^r$ quasi-norm,
$$
\|BSH_{[0,1]}^\alpha(f_N,g)\|_{L^{r,\infty}(\R)} \geq \lambda_N \left(\frac1N\right)^{1/r} = \frac18 N^{2 - \frac1r}.
$$
Since $r>\frac12$, we have $2-\frac1r>0$, and hence
\begin{equation}\label{eq:weak-norm-diverges}
\|BSH_{[0,1]}^\alpha(f_N,g)\|_{L^{r,\infty}(\R)} \to \infty
\qquad\text{as }N\to\infty.
\end{equation}
On the other hand, by \eqref{eq:uniform-fN-norm},
$$
\sup_{N\geq2}
\|f_N\|_{L^p(\R)} \|g\|_{L^q(\R)} < \infty.
$$
Combining this with \eqref{eq:weak-norm-diverges}, we obtain
$$
\frac{
\|BSH_{[0,1]}^\alpha(f_N,g)\|_{L^{r,\infty}(\R)}
}{
\|f_N\|_{L^p(\R)}\|g\|_{L^q(\R)}
}\to \infty \qquad \text{as } N \to \infty.
$$
Thus no estimate of the form
\eqref{eq:unit-interval-weak-bound} can hold. Finally, since
$$
L^r(\mathbb R)\subseteq L^{r,\infty}(\mathbb R)
\qquad (0<r<\infty),
$$
the corresponding strong--type estimate
$$
\|BSH_{[0,1]}^\alpha(f,g)\|_{L^r(\R)} \lesssim \|f\|_{L^p(\R)}\|g\|_{L^q(\R)}
$$
fails as well. \qed

\section{Proof of Proposition \ref{prop:dyadic}}\label{sec:proof-dyadic}

Fix $\alpha>0$ and let $1\leq p,q\leq\infty$ satisfy
$1/r=1/p+1/q$. We start by defining a rapidly growing sequence of
dyadic scales. Let $K=K(\alpha)\geq4$ be a sufficiently large integer and set
$$
 c_j=2^{Kj},\qquad j\geq1.
$$
We choose $K$ large enough that $2^{K\alpha}\geq16$ and all scale-separation inequalities below
hold for every $j\geq1$. Define the measurable function $c:\R\to\D$ by
\begin{equation}\label{eq:fixed-dyadic-selector}
 c(x)=
 \begin{cases}
 c_j,&x\in[c_j,c_j+3]\cup[-c_j^\alpha,-c_j^\alpha+1],\quad j\geq1,\\
 2,&\text{otherwise}.
 \end{cases}
\end{equation}
The intervals in \eqref{eq:fixed-dyadic-selector} are pairwise disjoint, and $c(x)$ is defined using all $j\geq1$ so that it is independent of $N$ and of the exponents $p,q$.

We first treat the regime where at least one of $p$ or $q$ is finite by placing a finite train of bumps into one of the two inputs according to the size of $\alpha$, and we then address the remaining infinite-exponent cases.

\medskip
\noindent\textbf{Case 1: $\alpha>1$ (with $1 \leq p < \infty$, $1 \leq q \leq \infty$).}
Choose nonnegative functions $f\in C_c^\infty((-2,2))$ and
$\psi\in C_c^\infty((2,3))$ such that $f=1$ on $[-1,1]$ and
$\psi \not \equiv 0$. For each $N \geq 1$, define
$$
 g_N(x) = \sum_{j=1}^N \psi(x - c_j^\alpha - c_j).
$$
For $K$ sufficiently large, the bumps defining $g_N$ have pairwise
disjoint supports, so for all $1 \leq q \leq \infty$ (with the convention $N^{1/\infty} = 1$),
$$
 \Vert f\Vert_{L^p(\R)} \Vert g_N \Vert_{L^q(\R)}
  = N^{1/q} \Vert f\Vert_{L^p(\R)} \Vert\psi\Vert_{L^q(\R)}.
$$
Put $I_j=[c_j,c_j+1]$. We show that $BSH_{c(x)}^\alpha(f,g_N)(x)$ is bounded below by a positive constant independent of $N$ on each $I_j$ for $1\leq j\leq N$.

Fix $x = c_j + y \in I_j$, where $0 \leq y \leq 1$. Since $\supp f \subset (-2,2)$, any nonzero contribution to the integral requires $x - t \in (-2,2)$, and thus
$$
 c_j - 2 < t < c_j + 3.
$$
In particular, $t > 0$ and $c_j/2 < t < 2c_j$ for $K$ large. Moreover, since both endpoints of $[c_j+(c_j-2)^\alpha,c_j+1+(c_j+3)^\alpha]$ divided by $c_j+c_j^\alpha$ tend to $1$ as $c_j \to \infty$, choosing $K$ sufficiently large ensures that
$$
 \frac{c_j+c_j^\alpha}{2}<x+t^\alpha<2(c_j+c_j^\alpha).
$$
Because the scales satisfy $c_{k+1} + c_{k+1}^\alpha \geq 16(c_k+c_k^\alpha)$ for all $k \geq 1$, the support of the $k$th bump of $g_N$ lies outside this interval whenever $k \neq j$. Thus only the $j$th bump of $g_N$ contributes on $I_j$.

Make the change of variables
$$
 z = x + t^\alpha - (c_j^\alpha + c_j) = y + t^\alpha - c_j^\alpha.
$$
The support condition $\supp \psi \subset (2,3)$ enforces $2 < z < 3$, and hence $1 < z - y < 3$. By the mean value theorem, there exists $\theta \in (c_j, t)$ such that $z - y = t^\alpha - c_j^\alpha = \alpha \theta^{\alpha-1}(t - c_j)$. In particular, after increasing $K$ if necessary,
$$
 0 < t - c_j = \frac{z-y}{\alpha \theta^{\alpha-1}}
 \leq\frac{3}{\alpha c_j^{\alpha-1}} \leq 1,
$$
so the integrand is supported strictly to the right of the singularity $t = c_j$, and
$$
x - t = y - (t - c_j) \in [-1,1],
$$
where $f(x-t)=1$. Since $dt = \frac{1}{\alpha t^{\alpha-1}}\,dz$ and $\theta \geq c_j > t/2$, we have
$$
 \frac{dt}{t-c_j}
 =\left(\frac{\theta}{t}\right)^{\alpha-1}\frac{dz}{z-y}
 \geq\frac{2^{1-\alpha}}{3}\,dz.
$$
Consequently,
\begin{equation}\label{eq:superlinear-output-lower}
 BSH_{c(x)}^\alpha(f,g_N)(x)
 \geq\frac{2^{1-\alpha}}{3}\int_2^3\psi(z)\,dz
 =:a_\alpha>0,\qquad x \in I_j.
\end{equation}
Since the intervals $I_j$ ($1 \leq j \leq N$) are pairwise disjoint and have length $1$, taking $\lambda=a_\alpha/2$ in the definition of the weak $L^r$ quasi-norm gives, for $r<\infty$,
$$
\Vert BSH_{c(x)}^\alpha(f,g_N)\Vert_{L^{r,\infty}(\R)} \geq \frac{a_\alpha}{2} \left|\bigcup_{j=1}^N I_j\right|^{1/r} = \frac{a_\alpha}{2}N^{1/r}.
$$
Therefore,
$$
 \frac{\Vert BSH_{c(x)}^\alpha(f,g_N)\Vert_{L^{r,\infty}(\R)}}
      {\Vert f\Vert_{L^p(\R)}\Vert g_N\Vert_{L^q(\R)}}
 \gtrsim_{\alpha,p,q} N^{1/r-1/q} = N^{1/p} \to \infty
 \qquad \text{as } N \to \infty,
$$
which establishes the failure for $\alpha > 1$ whenever $1 \leq p < \infty$ and $1 \leq q \leq \infty$.

\medskip
\noindent\textbf{Case 2: $\alpha=1$ (with $1 \leq p < \infty$, $1 \leq q \leq \infty$).}
The same argument as in Case 1 applies after choosing a nonnegative $f\in C_c^\infty((-4,2))$ with $f=1$ on $[-3,1]$. For $x = c_j + y \in I_j = [c_j, c_j + 1]$, the support of $f$ restricts $t$ to $c_j - 2 < t < c_j + 5$, which again isolates the $j$th bump of $g_N(x) = \sum_{k=1}^N \psi(x - 2c_k)$. Setting $z = x + t - 2c_j \in (2,3)$, we have
$$
t - c_j = z - y \in (1,3) \qquad \text{and} \qquad x - t = 2y - z \in (-3,0) \subset [-3,1],
$$
so $f(x-t) = 1$ and
$$
BSH_{c(x)}^1(f, g_N)(x) = \int_2^3 \frac{\psi(z)}{z-y}\,dz \geq \frac{1}{3}\int_2^3 \psi(z)\,dz > 0
$$
on $I_j$. The rest of the estimate is identical to Case 1.

\medskip
\noindent\textbf{Case 3: $0 < \alpha < 1$ (with $1 \leq p \leq \infty$, $1 \leq q < \infty$).}
Here we keep the second input fixed and insert the bump train into the first input. Choose a nonnegative function
$g\in C_c^\infty((-1,3))$ with $g=1$ on $[0,2]$, and with the same $\psi\in C_c^\infty((2,3))$ as in Case 1 define
$$
 f_N(x)=\sum_{j=1}^N\psi(-x-c_j^\alpha-c_j).
$$
The bump supports are pairwise disjoint, so for all $1 \leq p \leq \infty$,
$$
 \Vert f_N \Vert_{L^p(\R)} \Vert g\Vert_{L^q(\R)}
 = N^{1/p} \Vert \psi \Vert_{L^p(\R)} \Vert g \Vert_{L^q(\R)}.
$$
Put $I_j=[-c_j^\alpha,-c_j^\alpha+1]$ and fix $x=-c_j^\alpha+y\in I_j$, where $0 \leq y \leq 1$ and $1 \leq j \leq N$. Since $0 < \alpha < 1$ and $(1+v/c_j^\alpha)^{1/\alpha}\to 1$ uniformly for $v\in[-2,3]$, we may choose $K$ sufficiently large that
$$
c_j^\alpha \leq \frac{c_j}{4}, \qquad 4\alpha c_j^{\alpha-1}\leq1,\qquad \frac{c_j}{2} \leq(c_j^\alpha+v)^{1/\alpha}\leq2c_j \quad(-2\leq v\leq3)
$$
for every $j\geq1$.

First, we check that negative $t$ values do not contribute on $I_j$. Under the odd power convention, $t < 0$ gives $x+t^\alpha < x \leq -c_j^\alpha + 1 < -1$, so $g(x+t^\alpha)=0$. Under the even power convention, $g(x+|t|^\alpha) \neq 0$ requires $|t|^\alpha \in (c_j^\alpha - 2, c_j^\alpha + 3)$, and hence $|t| \geq c_j/2$; for $t < 0$ this gives $x - t \geq c_j/2 - c_j^\alpha > 0$, whereas $\supp f_N \subset (-\infty, 0)$. Thus negative $t$ contributions vanish under either convention.

For $t>0$, any nonzero contribution of $g(x+t^\alpha)$ requires
\begin{equation}\label{eq:g_t_restriction}
    t^\alpha\in(c_j^\alpha-2,c_j^\alpha+3),
 \qquad \text{and hence} \qquad \frac{c_j}{2}\leq t\leq2c_j.
\end{equation}
On the other hand, the $k$th bump of $f_N(x-t)$ is $\psi(t+c_j^\alpha-y-c_k^\alpha-c_k)$, which is nonzero only when
$$
t = c_k + c_k^\alpha - c_j^\alpha + y + z \qquad \text{for some } z \in (2,3).
$$
If $k < j$, then since $c_k^\alpha \leq c_k$, $y + z < 4$, and $c_k = 2^{-K(j-k)}c_j \leq 2^{-4}c_j = c_j/16$, we have
\begin{equation}\label{eq:t_upper_bd}
    t \leq 2c_k + 4 \leq \frac{c_j}{8}+4 < \frac{c_j}{2}.
\end{equation}
If $k > j$, then since $y + z > 2$ and $c_k \geq 16c_j$, we have
\begin{equation}\label{eq:t_lower_bd}
    t > c_k - c_j^\alpha \geq 16c_j - \frac{c_j}{4} > 2c_j.
\end{equation}
Both \eqref{eq:t_upper_bd} and \eqref{eq:t_lower_bd} contradict \eqref{eq:g_t_restriction}, so only the $j$th bump can contribute on $I_j$.

For the $j$th bump, put $s = t - c_j$. On its support, $f_N(x-t) = \psi(s-y)$ with $s - y \in (2,3)$, so $2 < s < 4$. By concavity of $u \mapsto u^\alpha$ and our choice of $K$,
$$
 0 \leq (c_j+s)^\alpha-c_j^\alpha
 \leq \alpha c_j^{\alpha-1}s \leq 4\alpha c_j^{\alpha-1} \leq 1.
$$
Thus $x + t^\alpha = y + (c_j + s)^\alpha - c_j^\alpha \in [0,2]$, where $g \equiv 1$. The integrand is supported strictly to the right of $t = c_j$, and substituting $z = s - y$ gives
$$
 BSH_{c(x)}^\alpha(f_N,g)(x) =\int_2^4\psi(s-y)\,\frac{ds}{s} =\int_2^3\frac{\psi(z)}{y+z}\,dz \geq \frac14\int_2^3\psi(z)\,dz =: a_\alpha > 0, \qquad x \in I_j.
$$
Since the intervals $I_j$ ($1 \leq j \leq N$) are disjoint and have length $1$, taking $\lambda = a_\alpha/2$ in the weak $L^r$ quasi-norm yields
$$
\Vert BSH_{c(x)}^\alpha(f_N,g)\Vert_{L^{r,\infty}(\R)} \geq \frac{a_\alpha}{2}N^{1/r},
$$
and hence
$$
 \frac{\Vert BSH_{c(x)}^\alpha(f_N,g)\Vert_{L^{r,\infty}(\R)}}
      {\Vert f_N\Vert_{L^p(\R)}\Vert g\Vert_{L^q(\R)}}
 \gtrsim_{\alpha,p,q}N^{1/r-1/p}
 =N^{1/q} \to \infty
 \qquad \text{as } N \to \infty
$$
whenever $1 \leq q < \infty$ and $1 \leq p \leq \infty$.

\medskip
\noindent\textbf{Case 4: The remaining infinite-exponent cases.}
Notice that Cases 1--2 already cover $q = \infty$, $1 \leq p < \infty$ when $\alpha \geq 1$, and Case 3 already covers $p = \infty$, $1 \leq q < \infty$ when $0 < \alpha < 1$. The remaining endpoint combinations are handled by replacing the bump train in the $L^\infty$ slot with a single smooth flat--top cutoff whose support grows with $N$ while its $L^\infty(\R)$ norm remains $1$:

\begin{itemize}
\item \textit{Subcase 4a: $q=\infty$ and $1 \leq p < \infty$ (for any $\alpha > 0$).} Fix a nonzero nonnegative $f\in C_c^\infty((0,1))$ and choose $g_N \in C_c^\infty(\mathbb R)$ satisfying $0\leq g_N\leq1$ and
$$
g_N = 1 \qquad \text{on } \left[0,\, c_N+3+(c_N+3)^\alpha\right].
$$
For $x = c_j + y$ with $2 \leq y \leq 3$ ($1 \leq j \leq N$), we have $c(x) = c_j$, and the substitution $u = x - t \in (0,1)$ gives $t = c_j + y - u > c_j$ and $x + t^\alpha \in [0, c_N + 3 + (c_N+3)^\alpha]$, so $g_N(x + t^\alpha) = 1$. Hence
$$
 BSH_{c(x)}^\alpha(f,g_N)(x)
 =\int_0^1\frac{f(u)}{y-u}\,du
 \geq\frac13\int_0^1f(u)\,du>0, \qquad x \in [c_j+2, c_j+3].
$$
Since there are $N$ disjoint unit intervals $[c_j+2, c_j+3]$ and $\Vert g_N\Vert_{L^\infty(\R)} = 1$, the weak $L^p$ norm of $BSH_{c(x)}^\alpha(f,g_N)$ grows at least as $N^{1/p} \to \infty$.

\item \textit{Subcase 4b: $p = \infty$, $1 \leq q < \infty$, and $\alpha \geq 1$.} Take $g = \psi \in C_c^\infty((2,3))$ and choose $f_N\in C_c^\infty(\mathbb R)$ satisfying $0\leq f_N\leq1$ and
$$
f_N=1 \qquad \text{on } [-c_N-c_N^\alpha-3,0].
$$
For $x = -c_j^\alpha + y \in I_j=[-c_j^\alpha,-c_j^\alpha+1]$ ($1 \leq j \leq N$), we have $c(x) = c_j$. For $t > 0$, $\psi(x + t^\alpha) \neq 0$ forces $z := x + t^\alpha \in (2,3)$, so
$$
t = (c_j^\alpha + z - y)^{1/\alpha} \in (c_j, c_j + 1)
$$
and
$$
x - t \in [-c_N - c_N^\alpha - 3, 0],
$$
where $f_N(x-t) = 1$. By the same mean-value calculation as in Case 1, the contribution from $t>0$ is at least
$$
a_\alpha = \frac{2^{1-\alpha}}{3}\int_2^3 \psi(z)\,dz > 0.
$$
The contribution from $t<0$ vanishes under the odd power convention; under the even power convention, $x + |t|^\alpha = z \in (2,3)$ forces $t = -(c_j^\alpha + z - y)^{1/\alpha} < -c_j$, so $|t - c_j| > 2c_j$ and the $t < 0$ integral has absolute value at most
$$
\frac{1}{2\alpha c_j^\alpha}\int_2^3\psi(z)\,dz \leq \frac{a_\alpha}{2}
$$
for $K$ sufficiently large. Therefore $BSH_{c(x)}^\alpha(f_N,\psi)(x) \geq a_\alpha/2$ on $I_j$ for $1 \leq j \leq N$, and hence
$$
\Vert BSH_{c(x)}^\alpha(f_N,\psi)\Vert_{L^{q,\infty}(\R)}
\geq \frac{a_\alpha}{4}N^{1/q} \to \infty \qquad \text{as } N \to \infty,
$$
while $\Vert f_N\Vert_{L^\infty(\R)} = 1$ and $\Vert\psi\Vert_{L^q(\R)}$ is fixed.

\item \textit{Subcase 4c: $p=q=r=\infty$ (for any $\alpha > 0$).} Recall from \eqref{eq:fixed-dyadic-selector} that $c(x)=2$ near the origin. For $0<\varepsilon<1/8$, choose $f_\varepsilon\in C_c^\infty((-3,-2-\varepsilon))$ satisfying $0\leq f_\varepsilon\leq1$ and
$$
f_\varepsilon=1
\qquad \text{on } \left[-\frac{5}{2},-2-2\varepsilon\right],
$$
and choose a fixed $g\in C_c^\infty(\mathbb R)$ satisfying $0\leq g\leq1$ and $g=1$ on $[0,1+4^\alpha]$. For $0\leq x\leq\varepsilon/2$, $f_\varepsilon(x-t) \neq 0$ forces $t \in (x + 2 + \varepsilon, x + 3) \subset (2, 4)$, so $x + t^\alpha \in [0, 1 + 4^\alpha]$ where $g = 1$, all contributions to the integral are positive, and
$$
BSH_{c(x)}^\alpha(f_\varepsilon,g)(x) \geq
\int_{x+2+2\varepsilon}^{x+5/2}\frac{dt}{t-2} = \log\left(\frac{x+1/2}{x+2\varepsilon}\right) \geq \log\left(\frac{1}{5\varepsilon}\right).
$$
Thus $\Vert BSH_{c(x)}^\alpha(f_\varepsilon, g) \Vert_{L^\infty(\R)} \to \infty$ as $\varepsilon \to 0^+$ while $\Vert f_\varepsilon \Vert_{L^\infty(\R)} = \Vert g \Vert_{L^\infty(\R)} = 1$.
\end{itemize}
This completes the proof of Proposition \ref{prop:dyadic}. \qed

\section{Open Questions}

While we are able to completely determine the Lebesgue-space boundedness of $BM_a^\alpha$ and $BSH_c^\alpha$, for $BH_a^\alpha$ our counterexample in Theorem \ref{thm:main1} only rules out strong--type boundedness at the $L^1$ endpoint. We therefore propose the following conjecture.

\begin{conjecture}\label{open_problem}
For $\alpha \in (0,\infty) \setminus \{1\}$ and $a:\R \to \R$ measurable (or smooth and globally Lipschitz with $0 < A < a(x) < B < \infty$), $BH_a^\alpha$ maps boundedly from $L^p(\R) \times L^q(\R)$ into $L^r(\R)$ for $1 < p,q,r < \infty$ with $\frac{1}{p} + \frac{1}{q} = \frac{1}{r}$, and into $L^{1,\infty}(\R)$ when $r = 1$.
\end{conjecture}

Proving such a statement is beyond the elementary positive-domination methods that were used to resolve Theorem \ref{thm:main2}, and instead one will most likely be required to utilize the \textsf{LGC methodology}, first introduced by Lie in \cite{UnifiedLie} and further refined in many subsequent works; see for instance \cite{BHL25}, \cite{GaitanLie1}, \cite{GaitanLie2}, \cite{HL24}, \cite{HL26}, and \cite{HL23}.

Our result for $BSH_c^\alpha$ establishes the existence of measurable functions $c$ which cause the operator to fail all strong and weak--type estimates. However, this leaves open the question of whether there are regularity and nondegeneracy conditions one can impose on $c$ that allow boundedness to be recovered. Assuming $c \in C^1(\R)$ so that discussing the derivative is meaningful, we formulate the following conjecture.

\begin{conjecture}\label{open_problem2}
    For $\alpha \in (0,\infty) \setminus \{1\}$ and $c \in C^1(\R)$, suppose there are $\epsilon_1,\epsilon_2 > 0$ such that for all $x \in \R$:
    \begin{enumerate}
        \item\label{i:1} $|c'(x) - 1| > \epsilon_1$;
        \item\label{i:3} $|1 + \alpha |c(x)|^{\alpha-1}c'(x)| > \epsilon_2$.
    \end{enumerate}
    Then for
    $$
    \frac{1}{2} < r < \infty, \qquad 1 < p,q < \infty, \qquad \frac{1}{r} = \frac{1}{p} + \frac{1}{q},
    $$
    there is a constant $C = C(c,\alpha,p,q)$ such that
    $$
    \Vert BSH_c^\alpha(f,g) \Vert_{L^r(\R)} \leq C\Vert f \Vert_{L^p(\R)} \Vert g \Vert_{L^q(\R)}. 
    $$
\end{conjecture}

We notice that conditions \eqref{i:1} and \eqref{i:3} rule out the example constructed in the proof of Proposition \ref{prop:unit-interval-shift}. This also explains why we do not use the much simpler choice $c_x = x$ in that proof, as we wish to demonstrate that $c'(x) - 1$ getting asymptotically close to $0$ without vanishing identically is already sufficient to destroy strong and weak--type bounds. Establishing Conjecture \ref{open_problem2} will again likely be based on the LGC methodology discussed.

One can of course consider the ``flat" variants of Conjectures \ref{open_problem} and \ref{open_problem2} by setting $\alpha = 1$. The conjectures are still expected to hold, but the proofs of such statements one would expect to be much more technically challenging and based on wave packet analysis. Thus we propose attacking the curved variants first, namely $\gamma(t) = t^3$, $\gamma(t)=t^2$ and then $\gamma(t)=t$, in increasing order of their expected difficulty.

\section{AI Declaration}\label{sec:AI_dec}

The first author devoted part of his PhD studies to seeking $L^2 \times L^2 \to L^1$ bounds for the first two operators considered in this work. With recent advances in AI, he began exploring these questions through discussions with Gemini 3.1 Pro. The model suggested that the bounds might fail, although its proposed arguments were incomplete. This suggestion motivated the authors to investigate unboundedness. The resulting counterexamples and the methods underlying their construction were developed by the authors and subsequently extended to the third operator. The construction of a counterexample with positive, bounded coefficients was also discussed with Codex. GPT--5.6 Luna assisted in drafting the text which briefly summarizes the result in \cite{SteinConjecture} concerning the Stein conjecture. All mathematical arguments and proofs were developed and verified by the authors, who reviewed and revised the AI-assisted text and take full responsibility for the manuscript.

\end{document}